\newcommand{\cal}[1]{\mathcal{#1}}
\theoremstyle{plain}
\newtheorem{theorem}{Theorem}
\newtheorem{lemma}{Lemma}[section]
\newtheorem{theo}[lemma]{Theorem}
\newtheorem{proposition}[lemma]{Proposition}
\newtheorem{corollary}[lemma]{Corollary}
\theoremstyle{definition}
\newtheorem*{defi}{Definition}
\newtheorem{remark}[lemma]{Remark}
\let\egthree=\phi
\let\phi=\varphi
\let\varphi=\egthree
\begin{document}
\title[Random walks ]
{On random walks on the mapping class group}
\author{Ursula Hamenst\"adt}
\thanks
{AMS subject classification: 57K20, 30F60}
\date{July 13, 2025}

\begin{abstract}
We define an electrification of the curve
graph of a surface $S$ of finite type and use it to identify the Poisson boundary of a random walk
on the mapping class group of $S$ with some logarithmic moment condition as
a stationary measure on the space of minimal and maximal geodesic 
laminations on $S$, equipped with the Hausdorff topology.
\end{abstract}

\maketitle

\section{Introduction}

The \emph{mapping class group} ${\cal M\cal C\cal G}$ of
a surface of finite type, that is, a
closed surface $S$ of genus $g\geq 0$ from which $m\geq 0$
points, so-called \emph{punctures},
have been deleted, is the group of isotopy classes of diffeomorphisms 
of $S$. 
We assume that $S$ is \emph{non-exceptional},
that is, $S$ is not a sphere with at most $4$
punctures or a torus with at most $1$ puncture.
The mapping class group is finitely presented.

A \emph{random walk} on the mapping class groups is generated by a probability
measure $\mu$ on ${\cal M\cal C\cal G}$. The $n$-th convolution of $\mu$ is the
measure defined inductively by $\mu^{(n)}=\sum_h \mu(h)\mu^{(n-1)}(h^{-1}g)$, with
$\mu^{(1)}=\mu$. We then can consider
the \emph{Poisson boundary} of the random
walk which is a maximal \emph{stationary measure} $\nu$. Here a measure $\nu$ on
some measure space with a ${\cal M\cal C\cal G}$-action is called
stationary if it satisfies the identity
\[\nu=\sum_h \mu(h)h_*\nu=\nu.\]

The Poisson boundary is a purely measure theoretic object, but we can ask whether it
can be realized on a topological ${\cal M\cal C\cal G}$-space $X$. By this we mean
that there exists a probability measure $\nu$ on $X$ whose measure class
is invariant and so that 
for the ${\cal M \cal C\cal G}$-action on $X$, the measure $\nu$ can be identified with 
the Poisson boundary, defined on the Borel $\sigma$-algebra on $X$. 

Kaimanovich and Masur \cite{KM96} discovered that 
the Poisson boundary of ${\cal M\cal C\cal G}$ can be realized on the
space ${\cal P\cal M\cal L}$ of \emph{projective measured geodesic laminations} on $S$
provided that the measure $\mu$ has finite entropy and finite first logarithmic
moment with respect to the \emph{Teichm\"uller distance} $d_{\cal T}$ on
\emph{Teichm\"uller space} ${\cal T}(S)$. This means that for any point
$x\in {\cal T}(S)$, we have
$\sum_h \mu(h)(\max\{0,\log d_{\cal T}(x,hx)\})<\infty$.

Gadre and Maher \cite{GM18} showed that under the stronger finite moment
condition $\sum_h \mu(h)d(e,h)<\infty$ where $d$ is a word metric on
${\cal M\cal C\cal G}$, almost every biinfinite path converges to a point
in ${\cal P\cal M\cal L}$, and the pair of forward and backward limit points
define a Teichm\"uller geodesic in the \emph{principal stratum of marked quadratic differentials}, 
consisting of differentials with only simple zeros. 
Moreover, the same holds true if the support of $\mu$ not necessarily generates
${\rm Mod}(S)$ but generates a semi-group $H$ that
is non-elementary and contains a
pseudo-Anosov mapping class belonging to the principal stratum. By this we mean
that a quadratic differential tangent to its invariant Teichm\"uller geodesic is
contained in the principal stratum.

The main purpose of this note is to provide a different perspective on this result
and show the following strengthening. 
Recall that the \emph{curve graph} ${\cal C\cal G}(S)$ of $S$ is the 
graph whose vertices are isotopy classes of essential simple closed curves on 
$S$ and where two such curves are connected by an edge of length one if they
can be realized disjointly. The curve graph ${\cal C\cal G}(S)$, equipped
with the natural simplicial metric $d_{{\cal C\cal G}(S)}$, is a 
Gromov hyperbolic geodesic metric graph \cite{MM99}, and the mapping class group acts on 
${\cal C\cal G}(S)$ as a group of simplicial automorphisms. 

The Gromov boundary $\partial {\cal C\cal G}(S)$ of the curve graph ${\cal C\cal G}(S)$ 
can naturally be identified with the space of \emph{minimal filling geodesic laminations}, 
equipped with the so-called \emph{coarse Hausdorff topology}. Here a geodesic lamination is 
filling if it decomposes $S$ into ideal polygons and once punctured ideal polygons. If these
polygons are all
ideal triangles and once punctured monogons, then the geodesic lamination is called 
\emph{maximal}.

A probability measure $\mu$ on ${\cal M\cal C\cal G}$ has 
\emph{finite logarithmic moment} for the action on ${\cal C\cal G}(S)$ if 
$\sum_h \mu(h) (\max\{ d_{{\cal C\cal G}(S)}(c,hc)),0\})<\infty$.  
Note that this condition is weaker than stating that the logarthmic moment for the
action on ${\cal T}(S)$ is finite. 

\begin{theorem}\label{poisson}
Let $\mu$ be a probability measure on ${\cal M\cal C\cal G}$  whose support
  generates a non-elementary semi-subgroup of ${\cal M\cal C\cal G}$ which contains at
least one pseudo-Anosov element belonging to the principal stratum.
Assume moreover that $\mu$ has finite entropy and finite
  logarithmic moment for the action on the curve graph. Then 
the Poisson boundary of the random walk on ${\cal M\cal C\cal G}$
  generated by $\mu$ can be realized
 as a stationary measure on the space of minimal and maximal geodesic
  laminations on $S$, equipped with the Hausdorff topology. 
\end{theorem}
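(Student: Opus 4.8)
The plan is to run the standard programme for realizing a Poisson boundary on a Gromov boundary, taking as the hyperbolic model the electrification of the curve graph constructed in the previous section, which I will denote $\mathcal{E}$; by that construction the Gromov boundary $\partial\mathcal{E}$ is the space $\mathcal{L}$ of minimal and maximal geodesic laminations on $S$ with the Hausdorff topology. Write $H$ for the semi-subgroup generated by the support of $\mu$ and fix a base vertex $x_0\in\mathcal{E}$. I would first record the properties of $\mathcal{E}$ that drive everything: $\mathcal{E}$ is separable and Gromov hyperbolic and $\mathcal{MCG}$ acts on it by isometries with $\partial\mathcal{E}=\mathcal{L}$; the action of $\mathcal{MCG}$ on $\mathcal{E}$ is non-elementary and a pseudo-Anosov in the principal stratum acts on $\mathcal{E}$ as a loxodromic isometry, so that the action of $H$ on $\mathcal{E}$ is non-elementary as well; and, via the coarsely $\mathcal{MCG}$-equivariant comparison between $\mathcal{E}$ and $\mathcal{CG}(S)$, distances in $\mathcal{E}$ are bounded above, up to multiplicative and additive constants, by distances in $\mathcal{CG}(S)$, so that finiteness of the logarithmic moment of $\mu$ for the curve graph forces finiteness of the logarithmic moment of $\mu$ for the action on $\mathcal{E}$.

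With this in hand, the first step is boundary convergence. Applying the theory of random walks on separable, not necessarily proper, Gromov hyperbolic spaces to the non-elementary action of $H$ on $\mathcal{E}$ with its finite logarithmic moment, almost every sample path $(w_nx_0)_n$ converges to a point $\mathbf{bnd}(\omega)\in\partial\mathcal{E}=\mathcal{L}$, and $\nu=\mathbf{bnd}_*\mathbb{P}$ is a $\mu$-stationary, non-atomic Borel probability measure on $\mathcal{L}$ with quasi-invariant measure class. Since $\mathbf{bnd}$ is, up to null sets, $\mathcal{MCG}$-equivariant and invariant under the time shift, $(\mathcal{L},\nu)$ is a $\mu$-boundary, that is, a measurable equivariant quotient of the Poisson boundary of the walk. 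The point of passing to $\mathcal{E}$ rather than to $\mathcal{CG}(S)$ is exactly that $\partial\mathcal{E}$ consists only of \emph{minimal and maximal} laminations, so the limit produced by the walk is automatically of this type and carries the Hausdorff topology by construction; note in particular that on $\mathcal{L}$ the coarse Hausdorff topology of the curve graph boundary coincides with the Hausdorff one, maximality leaving no room for a strictly larger Hausdorff accumulation point.

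The substantial step, and the point at which I expect the real difficulty, is to promote $(\mathcal{L},\nu)$ from a $\mu$-boundary to the full Poisson boundary, i.e.\ to show that the quotient map from the Poisson boundary to $(\mathcal{L},\nu)$ is an isomorphism of measured $\mathcal{MCG}$-spaces. I would argue this with Kaimanovich's strip (or ray-approximation) criterion together with the finite-entropy hypothesis. Passing to the two-sided walk, let $\check\nu$ be the hitting measure on $\mathcal{L}$ of the reflected walk. For $\nu\otimes\check\nu$-almost every pair $(\xi^+,\xi^-)$ the points are distinct in $\partial\mathcal{E}$ and hence joined by a coarsely unique bi-infinite geodesic $\gamma(\xi^-,\xi^+)$ in $\mathcal{E}$; one checks that this assignment can be made measurable and $\mathcal{MCG}$-equivariant on a conull set. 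Taking $S(\xi^-,\xi^+)=\{g\in\mathcal{MCG}:d_{\mathcal E}(gx_0,\gamma(\xi^-,\xi^+))\le R\}$ for a suitable $R=R(\delta)$ as the strips, the criterion comes down to the estimate
\[
\tfrac1n\log\#\bigl(S(\xi^-,\xi^+)\cap\{g\in\mathcal{MCG}:d_{\mathcal E}(x_0,gx_0)\le d_{\mathcal E}(x_0,w_nx_0)\}\bigr)\longrightarrow 0
\]
along almost every sample path. The ingredients are that the drift $\ell=\lim_n\tfrac1n d_{\mathcal E}(x_0,w_nx_0)$ exists, is finite and strictly positive (Kingman's subadditive ergodic theorem, using the finite logarithmic moment and non-elementarity), together with linear progress and sublinear tracking for the sample paths in $\mathcal{E}$, which identify the set counted on the left with the $\mathcal{MCG}$-orbit of $x_0$ inside a bounded neighbourhood of a geodesic segment of length comparable to $n$. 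The obstacle is that the action of $\mathcal{MCG}$ on $\mathcal{E}$ is far from proper, so this count is not controlled by hyperbolicity alone; one has to use coarse local finiteness properties of $\mathcal{E}$, or replace the strips by the thinner sets of group elements that fellow-travel $\gamma(\xi^-,\xi^+)$ for a definite proportion of the time --- exactly the device used by Kaimanovich and Masur along Teichm\"uller geodesics --- to bring the count down to polynomial, hence sub-exponential, size.

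Putting the two steps together, $(\mathcal{L},\nu)$ is a $\mu$-boundary which is maximal among $\mu$-boundaries, hence is the Poisson boundary of the walk; since $\mathcal{L}=\partial\mathcal{E}$ is the space of minimal and maximal geodesic laminations with the Hausdorff topology and $\nu$ is a stationary Borel measure on it with quasi-invariant class, this is precisely the claimed realization. (If one prefers, the same conclusion can be approached by first realizing the Poisson boundary on the curve graph boundary $\partial\mathcal{CG}(S)$ --- which the hypotheses on $\mu$ permit via the same circle of ideas --- and then using convergence in $\mathcal{E}$ to see that the resulting hitting measure is carried by minimal maximal laminations; the estimates involved are of the same nature.) The main obstacle throughout is the sub-exponential counting estimate in the strip criterion under only the logarithmic moment condition on the curve graph, which is what makes both the passage to the electrification $\mathcal{E}$ and the principal-stratum hypothesis essential --- the latter guaranteeing that the action on $\mathcal{E}$ is non-elementary with a loxodromic pseudo-Anosov and that $\partial\mathcal{E}$ is the lamination space appearing in the statement.
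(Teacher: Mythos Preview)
Your first step---convergence of sample paths in the electrification $\mathcal{E}={\cal P\cal C}(S)$ and construction of the stationary hitting measure $\nu$ on $\partial\mathcal{E}$ via Maher--Tiozzo---matches the paper. The divergence is in the maximality step. You attempt to run Kaimanovich's strip criterion directly on $\mathcal{E}$, and you correctly flag the obstacle: the ${\cal M\cal C\cal G}$-action on $\mathcal{E}$ is very far from proper (two curves at distance one in ${\cal P\cal C}(S)$ can be arbitrarily far apart in ${\cal C\cal G}(S)$, so the sets $\{g:d_{\mathcal E}(x_0,gx_0)\le R\}$ are enormous), and you do not actually produce the sub-exponential count. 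The suggested fixes---``coarse local finiteness'' of $\mathcal{E}$, or Kaimanovich--Masur style thinner strips---are not substantiated, and I do not see how to make them work on $\mathcal{E}$ under only a logarithmic moment hypothesis. As written, this is a gap.

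The paper sidesteps the strip criterion entirely by a short transfer argument. It applies Maher--Tiozzo once more to the action on ${\cal C\cal G}(S)$: since that action is \emph{acylindrical} (Bowditch), Theorem~1.5 of \cite{MT18} identifies the Poisson boundary with $(\partial{\cal C\cal G}(S),\nu_{\cal CG})$ directly, no counting required on your side. Then, because $\partial{\cal P\cal C}(S)\subset\partial{\cal C\cal G}(S)$ and the stationary measure on each boundary is unique (again \cite{MT18}), the hitting measure on $\partial{\cal C\cal G}(S)$ must coincide with the hitting measure $\nu$ on $\partial{\cal P\cal C}(S)$; in particular it is supported on the minimal maximal laminations. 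Your parenthetical alternative gestures in this direction, but the decisive ingredient you are missing is acylindricality of the curve-graph action, which is exactly what makes Maher--Tiozzo's Poisson-boundary theorem available and replaces your unverified strip estimate.
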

  
The proof of Theorem \ref{poisson} uses a result of independent interest which we explain next.

\begin{defi}\label{princ}
  The \emph{principal curve graph} of a closed surface $S$ of genus $g\geq 2$
  is the graph whose vertices are simple closed curves and where
  two such curves $c,d$ are connected by an edge of length one if
  $S\setminus(c\cup d)$ has a complementary component which neither is 
  a fourgon nor a sixgon nor a once punctured bigon.
\end{defi}  

We show in Section \ref{theprincipal} and Section \ref{gromov}. 

\begin{theorem}\label{hyp}
  The principal curve graph is a quasi-tree of infinite diameter.
 Its Gromov boundary
  equals the space of minimal and maximal geodesic laminations, equipped
  with the Hausdorff topology.
\end{theorem}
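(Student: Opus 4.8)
\emph{Setup and strategy.} I plan to obtain the quasi-tree property from Manning's bottleneck criterion: a connected graph is quasi-isometric to a tree precisely when there is a constant $\Delta>0$ such that for any two vertices $a,b$, every geodesic $[a,b]$ carries a vertex $m$ that is met within distance $\Delta$ by every edge path from $a$ to $b$; verifying this yields Gromov hyperbolicity at the same time. Two preliminary observations fix the picture. On a closed surface of genus $g\ge 2$, two disjoint non-isotopic simple closed curves always cut off a complementary region that is neither a fourgon, a sixgon, nor a once-punctured bigon; hence disjoint curves are adjacent in the principal curve graph $\mathcal P$, and the identity on vertices is a $1$-Lipschitz surjection $\mathcal{CG}(S)\to\mathcal P$. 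Moreover a pair $c,d$ is \emph{non}-adjacent exactly when $c\cup d$ fills $S$ with every complementary region a square or a hexagon; filling in these polygons realises $(c,d)$ as a filling pair supported on a quadratic differential $q(c,d)$ with only simple zeros, i.e.\ in the principal stratum, and I take the bottleneck vertex $m(c,d)$ to be a systole at the midpoint of the Teichm\"uller geodesic of $q(c,d)$, well defined up to bounded $\mathcal{CG}(S)$-distance and hence up to bounded $\mathcal P$-distance by the $1$-Lipschitz map and the Masur--Minsky/Rafi control of systoles.

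\emph{The bottleneck lemma -- the main obstacle.} The heart of the proof is to show that any edge path in $\mathcal P$ between a non-adjacent pair $c,d$ passes within bounded $\mathcal P$-distance of $m(c,d)$. The difficulty is that an edge of $\mathcal P$ joining $e$ and $e'$ records only that $e\cup e'$ has one large complementary region -- the corresponding flat structure lives over a non-principal, lower-dimensional stratum -- and $d_{\mathcal{CG}(S)}(e,e')$ can be arbitrarily large, so one cannot simply push $\mathcal P$-paths into $\mathcal{CG}(S)$ and invoke hyperbolicity of the curve graph. Instead I would guess geodesics in $\mathcal P$ outright, in the style of Bowditch and Masur--Schleimer: to $c,d$ assign the curves appearing along a splitting sequence of generic train tracks associated to $q(c,d)$, verify the thin-triangle and local-finiteness conditions for this family, and exploit that along such a splitting sequence the only complementary regions that ever occur are squares and hexagons -- so curves far apart in the sequence are automatically non-adjacent in $\mathcal P$ -- to conclude that no $\mathcal P$-path between $c$ and $d$ can bypass the part of the sequence carrying $m(c,d)$. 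The principal-stratum hypothesis is exactly what forbids the dangerous long shortcuts, and the technically sharpest point is a bounded-geodesic-image type estimate adapted to generic train tracks. Running this along a single generic splitting sequence of unbounded length -- for instance one generated by a principal pseudo-Anosov mapping class -- forces $d_{\mathcal P}(c_0,c_n)\to\infty$, so $\mathcal P$ has infinite diameter. An alternative packaging of the whole argument is through a Bestvina--Bromberg--Fujiwara projection complex built from the mapping class group orbit of axes of principal pseudo-Anosov elements, which is a quasi-tree by construction.

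\emph{The Gromov boundary.} Using the splitting-sequence description above, each geodesic ray in $\mathcal P$ is shadowed by a generic splitting sequence, whose carried laminations have a single common refinement $\lambda$, a geodesic lamination that is minimal and filling by the standard train-track theory and by Klarreich's identification of $\partial\,\mathcal{CG}(S)$ with minimal filling laminations in the coarse Hausdorff topology (equivalently: a curve in the complement of a non-filling Hausdorff limit of the ray would be disjoint from all far vertices of the ray, hence at bounded $\mathcal P$-distance from them, contradicting that a geodesic ray leaves every ball). The new point is that $\lambda$ is \emph{maximal}. Indeed, if $\lambda$ had a complementary ideal polygon $P$ with at least four sides, then any two curves sufficiently Hausdorff-close to $\lambda$ leave, around the core of $P$, a complementary region bounded by at least eight arcs that alternate between the two curves, hence with at least eight sides if the pair fills $S$ and a large non-disk complementary region otherwise; in either case the two curves are adjacent in $\mathcal P$. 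But along a geodesic ray $(e_n)$ one has $d_{\mathcal P}(e_n,e_m)=|n-m|$, so $e_n$ and $e_m$ are non-adjacent for $|n-m|\ge 2$ while, along a convergent subsequence, both are Hausdorff-close to $\lambda$ -- a contradiction. Hence every complementary region of $\lambda$ has at most three sides, and since $\lambda$ is a geodesic lamination on a closed surface (no monogons, bigons, or punctures) these regions are ideal triangles: $\lambda$ is minimal and maximal. Conversely, the same local computation shows that curves efficiently approximating a given minimal and maximal lamination $\mu$ are pairwise non-adjacent, and since any quadratic differential with vertical foliation $\mu$ has only simple zeros (its complementary triangles) and so lies in the principal stratum, the principal Teichm\"uller ray directed by $\mu$ is shadowed by a generic splitting sequence and hence by a $\mathcal P$-geodesic ray converging to $\mu$; this gives surjectivity, while injectivity follows from the bottleneck lemma together with the fact that distinct laminations in the target are transverse and their approximating rays diverge. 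Finally the resulting bijection $\partial\mathcal P\to\{\text{minimal and maximal laminations}\}$ is a homeomorphism because Gromov convergence of rays amounts, through the splitting-sequence description, to eventual agreement of arbitrarily long initial train-track data, which is precisely Hausdorff convergence of the limit laminations; continuity in both directions and compactness of the space of laminations complete the proof.
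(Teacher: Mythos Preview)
Your proposal has two genuine gaps. First, the bottleneck lemma---the heart of the argument---is not actually proved: you say you ``would guess geodesics'' and that the key step is ``a bounded-geodesic-image type estimate adapted to generic train tracks,'' but this is precisely the content that must be supplied. Your assertion that along a generic splitting sequence ``the only complementary regions that ever occur are squares and hexagons'' is unclear and appears to be wrong as stated: the complementary regions of a \emph{maximal} train track are trigons, and two curves carried by the same maximal train track can have arbitrary complementary combinatorics in $S\setminus(c_i\cup c_j)$, so far-apart curves along the sequence are not automatically non-adjacent in $\mathcal{P}$. The paper's mechanism runs in the opposite direction: curves carried by a common \emph{non}-maximal train track are automatically $\mathcal{P}$-adjacent, and this is what powers both the Kapovich--Rafi hyperbolicity step and the exclusion of non-maximal laminations from the boundary. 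The paper's bottleneck proof uses instead a short inductive claim---if a complete train track $\tau$ carries $c$ but not $d$, then every $\mathcal{P}$-path from $c$ to $d$ meets the $1$-neighbourhood of a vertex cycle of $\tau$---applied to train tracks in standard form for a marking containing the geodesic midpoint.

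Second, your maximality argument for boundary laminations is circular. You argue that if the limit $\lambda$ of a $\mathcal{P}$-ray had a complementary $k$-gon with $k\geq 4$, then curves Hausdorff-close to $\lambda$ would share a $\geq 8$-sided complementary region and hence be $\mathcal{P}$-adjacent, contradicting $d_{\mathcal{P}}(e_n,e_m)=\vert n-m\vert$. But Klarreich gives only \emph{coarse} Hausdorff convergence: a Hausdorff-convergent subsequence of the $e_n$ converges to some lamination $\mu\supseteq\lambda$, and if $\lambda$ is not maximal then $\mu$ may well be a maximal completion of $\lambda$ obtained by adding diagonals in the polygons, in which case the $e_n$ are Hausdorff-close to $\mu$ rather than to $\lambda$ and no large complementary region need survive. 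Different subsequences may moreover converge to different completions. You cannot invoke Hausdorff-closeness to $\lambda$ itself without already knowing $\lambda$ is maximal. The paper sidesteps this by showing directly that a non-maximal minimal filling $\lambda$ is carried by a non-maximal train track $\xi$ of the same combinatorial type, so that a splitting sequence toward $\lambda$ stays in a set of $\mathcal{P}$-diameter at most one and $\lambda$ cannot define a point of $\partial\mathcal{P}$.
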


Theorem \ref{poisson} is obtained from Theorem \ref{hyp} and the work of Maher and Tiozzo \cite{MT18}.  
The principal curve graph and its variants mentioned in Remark \ref{intermediate}
is also very useful for the understanding of the geometry of the 
curve graph and the mapping class group, but we defer this discussion to
forthcoming work.

The main part of the 
article is divided into three sections. In Section \ref{theprincipal} we introduce the principal
curve graph and show that it is hyperbolic, of infinite diameter. In Section \ref{gromov} we show that
it is a quasi-tree whose Gromov boundary can be identified with the space of minimal and maximal
geodesic laminations, equipped with the Hausdorff topology. 
In Section \ref{random} we prove Theorem \ref{poisson}.

\noindent{\bf Acknowledgement:} This work was carried out when the author visited the MSRI in Berkeley
in fall 2015.

\section{The principal curve graph}\label{theprincipal}

The vertices of the \emph{curve graph} ${\cal C\cal G}(S)$ of 
a non-exceptional 
surface $S$ of finite type are isotopy classes of
essential simple closed curves on $S$, 
that is, simple closed curves which are not contractible or homotopic into
a puncture, and where two such vertices
are connected by an edge of length one if they can be realized disjointly.

Let $c,d$ be two simple closed curves on $S$ in minimal position, that is,
$c,d$ intersect in the minimal number of points. In the sequel we always assume
that this is the case.  
Then $S\setminus (c\cup d)$ is
a union of complementary regions whose boundaries consist of
subarcs of $c$ and $d$ in alternating order. In particular, a
complementary component which is simply connected is 
a polygon with an even number of sides, and a complementary
component which is a punctured disc is a punctured polygon
with an even number of sides.

The curves $c,d$ \emph{bind} $S$ if each component of 
$S\setminus (c\cup d)$ is 
a disc or a once punctured disc. This is equivalent to stating 
that there is no
component of $S\setminus (c\cup d)$ 
which contains an essential simple closed curve. 

Let us assume in the sequel that $c,d$ bind $S$. 
By reasons of Euler
characteristic, there is at least one 
component of $S\setminus (c\cup d)$ which is a polygon 
with at least 6 sides or a once punctured polygon with 
at least 4 sides. As in the introduction, we define the 
principal curve graph ${\cal P\cal C}(S)$
of $S$  as the graph whose vertices
  are isotopy classes of essential
  simple closed curves on $S$ 
  and where two such curves $c,d$ are connected
  by an edge of length one if there is a component of 
  $S\setminus (c\cup d)$ 
which either contains an essential simple closed curve of $S$, or is 
a polygon with at least 8 sides or a once punctured polygon with at least 
6 sides. Clearly the mapping class group acts on 
${\cal P\cal C}(S)$ as a group of simplicial automorphisms.

The goal of this section is to show.

\begin{proposition}\label{hyperbolicity}
  The principal curve graph is a hyperbolic geodesic metric space of
infinite diameter.  
\end{proposition}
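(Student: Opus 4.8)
The plan is to compare the principal curve graph $\mathcal{PC}(S)$ with the ordinary curve graph $\mathcal{CG}(S)$ via a coarsely Lipschitz surjection and then promote this to hyperbolicity using a guessing-geodesics / Masur–Minsky type criterion. First I would observe that the identity on vertex sets gives a map $\iota\colon \mathcal{CG}(S)\to\mathcal{PC}(S)$: if $c,d$ are adjacent in $\mathcal{CG}(S)$, i.e.\ disjoint, then $S\setminus(c\cup d)$ has a complementary component carrying an essential simple closed curve (since $S$ is non-exceptional, two disjoint curves never bind), so they are adjacent in $\mathcal{PC}(S)$ as well — hence $d_{\mathcal{PC}(S)}\le d_{\mathcal{CG}(S)}$. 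This immediately gives infinite diameter once we exhibit one pair of curves at unbounded $\mathcal{PC}$-distance; for that I would use a pseudo-Anosov $\psi$ in the principal stratum and show that $d_{\mathcal{PC}(S)}(c,\psi^n c)\to\infty$, e.g.\ because the invariant lamination of $\psi$ is minimal and maximal and a bounded $\mathcal{PC}$-path between $c$ and $\psi^n c$ would force, via the structure of complementary polygons, a uniform bound obstructing the nesting of subsurface projections along the axis. (Alternatively, infinite diameter follows a posteriori from Theorem 3.3 identifying the Gromov boundary.)

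The core is hyperbolicity. I would fix, for each pair of vertices $c,d$, a geodesic $[c,d]$ in the ordinary curve graph and push it forward by $\iota$ to a path $\gamma_{c,d}$ in $\mathcal{PC}(S)$; then verify the Masur–Minsky / Bowditch "guessing geodesics" axioms for the family $\{\gamma_{c,d}\}$: (i) each $\gamma_{c,d}$ is a (reparametrized) quasigeodesic in $\mathcal{PC}(S)$, (ii) the paths are uniformly coarsely connected and satisfy a thin-triangles condition. The point (i) is where the real content lies: I must show that a curve-graph geodesic, when read in the principal metric, cannot backtrack or shortcut by more than a bounded amount — equivalently, that $d_{\mathcal{CG}(S)}$ and $d_{\mathcal{PC}(S)}$ are comparable along curve-graph geodesics. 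To control $d_{\mathcal{PC}(S)}$ from below I would argue that an edge of $\mathcal{PC}(S)$, i.e.\ a pair $(c,d)$ with a large complementary polygon or a component containing an essential curve, has bounded $\mathcal{CG}(S)$-distance only in the "essential curve" case; in the "large polygon" case the two curves can be very far in $\mathcal{CG}(S)$, so one instead estimates subsurface projections: a single principal edge changes the projection $\pi_Y$ to any proper subsurface $Y$ by a uniformly bounded amount except for boundedly many "active" subsurfaces. Combining this with the Masur–Minsky distance formula for $\mathcal{CG}(S)$ (or working directly with the hierarchy machinery) yields that consecutive curves on $\gamma_{c,d}$ make definite progress, giving the quasigeodesic property.

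The main obstacle, as just indicated, is handling edges of $\mathcal{PC}(S)$ arising from a large complementary polygon rather than from an embedded essential curve: such an edge can join $\mathcal{CG}(S)$-far curves, so the naive Lipschitz comparison fails and one genuinely needs a subsurface-projection (or Teichmüller-geodesic / thick-part) argument to show that even these "long" edges do not create shortcuts between points that are principally far apart. I would organize this by proving a local-to-global statement: if $c=c_0,c_1,\dots,c_n=d$ is any $\mathcal{PC}(S)$-path, then $d_{\mathcal{CG}(S)}(c,d)\le K\sum_i \delta(c_{i-1},c_i)$ where $\delta$ is controlled, so that $\mathcal{PC}(S)$-balls are $\mathcal{CG}(S)$-coarsely connected with controlled diameter; feeding this into Bowditch's criterion then gives $\delta$-hyperbolicity of $\mathcal{PC}(S)$ with $\delta$ depending only on $S$. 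A clean alternative, which I would pursue in parallel, is to realize $\mathcal{PC}(S)$ as obtained from $\mathcal{CG}(S)$ by coning off a collection of uniformly quasiconvex subsets (the sets of curves binding $S$ together with a fixed polygonal pattern, grouped into mapping-class-group orbits of bounded "width"), and invoke the Kapovich–Rafi electrification theorem to conclude hyperbolicity of the resulting space; this also sets up the quasi-tree statement of Theorem 3.3.
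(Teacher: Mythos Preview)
Your framework is right in outline---the paper also uses the Kapovich--Rafi electrification criterion (Corollary~2.4 of \cite{KR14}), which asks exactly that for every edge $(c,d)$ of $\mathcal{PC}(S)$ there be a uniform quasi-geodesic in $\mathcal{CG}(S)$ from $c$ to $d$ whose $\Psi$-image has bounded $\mathcal{PC}(S)$-diameter. But your proposal has a genuine gap at the point you yourself flag as the ``main obstacle'': you do not supply a mechanism for producing that quasi-geodesic when $(c,d)$ is a ``large polygon'' edge. Your subsurface-projection claim (that a single principal edge changes $\pi_Y$ by a bounded amount for all but boundedly many $Y$) is unsubstantiated and in fact not obviously true: two curves that bind $S$ with, say, a single octagon in the complement can have large projections to arbitrarily many subsurfaces, and the Masur--Minsky distance formula gives you no handle on the $\mathcal{PC}$-metric. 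Likewise, your ``coning off quasiconvex subsets'' alternative needs you to \emph{name} the subsets and prove they are quasiconvex, which is precisely the missing step.

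The paper's missing ingredient is a train-track construction (Lemma~\ref{oneswitch}, following Masur--Minsky \cite{MM04}): given $c,d$ binding $S$ and at distance one in $\mathcal{PC}(S)$, one collapses $c$ outside a suitable arc to build a recurrent one-switch train track $\sigma$ that carries $d$, intersects $c$ once, and---crucially---is \emph{not} maximal, because the large complementary polygon of $c\cup d$ survives as a non-trigon complementary region of $\sigma$. Any two curves carried by a non-maximal train track are at distance $\le 1$ in $\mathcal{PC}(S)$. Now a splitting sequence from $\sigma$ down to $d$ has vertex cycles forming a uniform unparameterized quasi-geodesic in $\mathcal{CG}(S)$ \cite{H06}, all of which are carried by $\sigma$ and hence lie in a ball of radius one in $\mathcal{PC}(S)$. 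This is exactly the input the Kapovich--Rafi criterion needs. Your proposal never arrives at this construction, and without it (or a genuine substitute) the argument does not close.

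For infinite diameter, your pseudo-Anosov idea is plausible but circular as stated: showing that $\psi$ acts loxodromically on $\mathcal{PC}(S)$ is essentially the content of Corollary~\ref{hyperbolic}, which in the paper is \emph{deduced from} infinite diameter, not used to prove it. The paper instead proves a compactness lemma (Lemma~\ref{converge}): if $d_{\mathcal{PC}(S)}(c_i,d_i)\le 1$ and $c_i$ converges in $\mathcal{PML}$ to a lamination with minimal maximal support $\mu$, then any limit of $d_i$ is also supported in $\mu$---because otherwise the limiting Teichm\"uller geodesic would lie in the principal stratum while being approximated by geodesics that do not. Feeding this into a Luo-type descending induction on $d_{\mathcal{PC}(S)}(c_0,c_i)$ along a $\mathcal{CG}(S)$-ray to $\mu$ gives the contradiction.
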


The principal curve graph ${\cal P\cal C}(S)$ 
contains a ${\cal M\cal C\cal G}$-invariant 
subgraph ${\cal C\cal G}_0(S)$, with the same set of vertices, that is, 
vertices are simple closed curves, and where two such vertices $c,d$ are
connected by an edge of length one if 
there is a component of $S\setminus (c\cup d)$ 
which contains an essential simple closed curve of $S$. 
Then this curve  is disjoint
from both $c,d$ and therefore 
the distance in the curve graph ${\cal C\cal G}(S)$ of $S$ 
between $c,d$ is at most two.
Vice versa, if $c,d$ are connected in ${\cal C\cal G}(S)$ by an edge,
then they are disjoint. If $S$ is not a five punctured sphere or a twice punctured torus, 
then $S\setminus (c\cup d)$ contains a component which is 
not simply connected and $c,d$ are connected in ${\cal C\cal G}_0(S)$ by an edge.
If $S$ is a five punctured sphere or a twice punctured torus then it is easy to see that there
is a simple closed curve $e$ which is connected to both $c,d$ by an edge in ${\cal C\cal G}_0(S)$.
Thus we have shown.

\begin{lemma}\label{subgraph}
${\cal C\cal G}_0(S)$ is two-quasi isometric to the curve graph ${\cal C\cal G}(S)$ of $S$. 
\end{lemma}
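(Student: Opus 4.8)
The plan is to observe that ${\cal C\cal G}_0(S)$ and ${\cal C\cal G}(S)$ have literally the same vertex set --- the isotopy classes of essential simple closed curves on $S$ --- and to show that the identity map between them distorts distances by a factor at most $2$. For this it suffices to prove the two edgewise estimates
\[
 d_{{\cal C\cal G}(S)}(c,d)\le 2 \ \text{ if }\ c\sim d \ \text{in}\ {\cal C\cal G}_0(S),
 \qquad
 d_{{\cal C\cal G}_0(S)}(c,d)\le 2 \ \text{ if }\ c\sim d \ \text{in}\ {\cal C\cal G}(S),
\]
since concatenating them along geodesics gives $\tfrac12\, d_{{\cal C\cal G}(S)}\le d_{{\cal C\cal G}_0(S)}\le 2\,d_{{\cal C\cal G}(S)}$ on vertices, and every point of either graph is within distance $\tfrac12$ of a vertex; in particular ${\cal C\cal G}_0(S)$ is connected and the identity is the desired quasi-isometry.

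The first estimate is immediate from the definitions. If $c,d$ span an edge of ${\cal C\cal G}_0(S)$, then some component of $S\setminus(c\cup d)$ carries a simple closed curve $e$ essential in $S$; this $e$ can be realized disjointly from $c\cup d$, hence $d_{{\cal C\cal G}(S)}(c,e)\le 1$ and $d_{{\cal C\cal G}(S)}(e,d)\le 1$ (with the value $0$ in the degenerate case that $e$ is isotopic to $c$ or to $d$), and the triangle inequality finishes it.

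For the second estimate, let $c,d$ be disjoint (the isotopic case being trivial). A count of Euler characteristics shows that the complementary regions of $c\cup d$ cannot all be discs and once-punctured discs, since $\chi(S)<0$; a short analysis of which non-exceptional surfaces admit a pair of disjoint curves all of whose complementary regions are discs, once-punctured discs, or pairs of pants whose three boundary components are each a component of $c$, a component of $d$, or a puncture, shows that away from the five-times punctured sphere and the twice-punctured torus there is a complementary region furnishing a curve essential in $S$, so that $c,d$ already span an edge of ${\cal C\cal G}_0(S)$. For those two remaining surfaces one argues by hand: given disjoint $c,d$, one produces an auxiliary curve $e$ --- for example one cutting off a single puncture, or an appropriate non-separating curve --- such that $S\setminus(c\cup e)$ and $S\setminus(d\cup e)$ each contain a region of negative enough Euler characteristic to carry an essential curve, so that $e$ is ${\cal C\cal G}_0(S)$-adjacent to both $c$ and $d$ and hence $d_{{\cal C\cal G}_0(S)}(c,d)\le 2$.

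The Euler characteristic bookkeeping is routine; the main obstacle is the explicit verification for the five-times punctured sphere and the twice-punctured torus, where the relay curve $e$ must be exhibited directly, together with checking that it genuinely witnesses edges of ${\cal C\cal G}_0(S)$ rather than just boundary-parallel curves. One should also fix the intended meaning of ``two-quasi-isometric'': I read it as the statement that the identity map on vertices is a $(2,2)$-quasi-isometry, which is exactly what the estimates above provide.
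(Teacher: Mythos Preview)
Your proposal is correct and follows the same route as the paper, whose proof (the paragraph immediately preceding the lemma) establishes exactly your two edgewise estimates, singles out the five-punctured sphere and the twice-punctured torus as the cases needing a relay curve $e$, and leaves those as ``easy to see.'' One minor quibble: for disjoint essential $c,d$ no complementary region can be a disc or once-punctured disc anyway, so the real obstruction in your Euler-characteristic step is that all pieces might be pairs of pants---which you then identify correctly.
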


As ${\cal P\cal C}(S)$ is obtained from ${\cal C\cal G}_0(S)$ by adding some edges, 
the graph 
${\cal P\cal C}(S)$ can be thought of as an electrification of ${\cal C\cal G}(S)$.  
If $d_{{\cal P\cal C}(S)}(c,d)\geq 2$ then we say
that $c,d$ \emph{completely fill} $S$.

A \emph{train track} on $S$ is an embedded
1-complex $\tau\subset S$ whose edges
(called \emph{branches}) are smooth arcs with
well-defined tangent vectors at the endpoints. At any vertex
(called a \emph{switch}) the incident edges are mutually tangent.
Through each switch there is a path of class $C^1$
which is embedded
in $\tau$ and contains the switch in its interior. In
particular, the half-branches which are incident
on a fixed switch are divided into two classes according to
the orientation of an inward
pointing tangent at the switch. Each closed curve component of
$\tau$ has a unique bivalent switch, and all other switches are at
least trivalent.
The complementary regions of the
train track have negative Euler characteristic, which means
that they are different from discs with $0,1$ or
$2$ cusps at the boundary and different from
annuli and once-punctured discs
with no cusps at the boundary.
A train track is called \emph{maximal} if each of
its complementary components either is a trigon,
that is, a topological disc with three cusps at the boundary,
or a once punctured monogon, that is, a once punctured disc
with one cusp at the boundary. 
We always identify train
tracks which are isotopic.
The book \cite{PH92} contains a comprehensive treatment
of train tracks which we refer to throughout the article.

A train track is called \emph{generic} if all switches are
at most trivalent.
The train track $\tau$ is called \emph{transversely recurrent} if
every branch $b$ of $\tau$ is intersected by an embedded simple
closed curve $c=c(b)\subset S$ of class $C^1$ 
which intersects $\tau$
transversely and is such that $S\setminus(\tau\cup c)$ does not contain an
embedded \emph{bigon}, that is, a disc with two corners at the
boundary.


A \emph{transverse measure} on a train track $\tau$ is a
nonnegative weight function $\mu$ on the branches of $\tau$
satisfying the \emph{switch condition}:
for every switch $s$ of $\tau$, 
the half-branches incident on $s$ are divided into two
classes, and the sums of the weights
over all half-branches in each of the two classes 
coincide.
The train track is called
\emph{recurrent} if it admits a transverse measure which is
positive on every branch. 
We denote by ${\cal T\cal T}$ the set of all recurrent and transversely recurrent
(marked) train tracks on $S$. The mapping class group of $S$ acts on 
${\cal T\cal T}$ as a group of transformations.

A \emph{geodesic lamination} for a complete
hyperbolic structure on $S$ of finite volume is
a \emph{compact} subset of $S$ which is foliated into simple
geodesics. A geodesic lamination $\lambda$ is \emph{minimal}
if each of its half-leaves is dense in $\lambda$.
A geodesic lamination is \emph{maximal}
if its complementary regions are all ideal triangles
or once punctured monogons (note that a minimal geodesic
lamination can also be maximal).
The space of geodesic laminations on $S$
equipped with the \emph{Hausdorff topology} is
a compact metrizable space.

A geodesic lamination $\lambda$
is called \emph{complete} if $\lambda$ is maximal and
can be approximated in the Hausdorff topology by
simple closed geodesics. The space ${\cal C\cal L}$
of all complete geodesic laminations equipped with
the Hausdorff topology is compact. The mapping
class group ${\cal M\cal C\cal G}(S)$ naturally
acts on ${\cal C\cal L}$ as a group of 
homeomorphisms.
Every geodesic lamination $\lambda$
which is a disjoint union of finitely many minimal components
is a \emph{sublamination} of
a complete geodesic lamination, that is, there
is a complete geodesic lamination which contains
$\lambda$ as a closed subset (Lemma 2.2 of \cite{H09}).

A train track or a geodesic lamination $\sigma$ is
\emph{carried} by a transversely recurrent train track $\tau$ if
there is a map $\phi:S\to S$ of class $C^1$ which is homotopic to the
identity and maps $\sigma$ into $\tau$ in such a way 
that the restriction of the differential of $\phi$
to the tangent space of $\sigma$ vanishes nowhere;
note that this makes sense since a train track has a tangent
line everywhere. We call the restriction of $\phi$ to
$\sigma$ a \emph{carrying map} for $\sigma$.
Every geodesic lamination $\lambda$ which is carried
by $\sigma$ is also carried by $\tau$. A train track
$\tau$ is called \emph{complete} if it is generic and
transversely recurrent and if it carries
a complete geodesic lamination.
The space
of complete geodesic laminations carried by a complete
train track $\tau$ is open and closed in ${\cal C\cal L}$
(Lemma 2.3 of \cite{H09}).
In particular, the space ${\cal C\cal L}$
is totally disconnected.

The following lemma is based on a construction 
due to Masur and Minsky (Section 4 of \cite{MM04}).

\begin{lemma}\label{oneswitch}
  Let $c,d$ be any two simple closed curves on $S$ which bind $S$.
  Then there exist a recurrent one-switch train track
  $\eta(c,d)$ which carries $d$ and intersects $c$ in a single point.
This train track is maximal only if $c,d$ completely fill $S$.
\end{lemma}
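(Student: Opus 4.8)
The plan is to carry out the combing construction of Masur and Minsky (Section 4 of \cite{MM04}). Fix an embedded annulus $A$ with core $c$, thin enough that $d\cap A$ consists of exactly $i(c,d)$ arcs crossing $A$ from one boundary circle to the other, and let $d_0=d\setminus\operatorname{int}(A)$ be the complementary union of disjoint arcs with endpoints on $\partial A$. Pick a point $s\in c$ not on $d$. Now homotope $d$ rel $d_0$, through immersions, pushing each of the $i(c,d)$ crossing arcs so that it runs through $s$, with the resulting strands fanning out monotonically on each side of $c$ so that no strand recrosses $c$ and there is no backtracking at $s$. Let $\eta=\eta(c,d)$ be the image $1$-complex. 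After smoothing at the points where the fan meets $d_0$, it has a single switch, at $s$, of valence $2i(c,d)$, and its branches are exactly the $i(c,d)$ resulting loops at $s$; this is the required one-switch train track. The combing homotopy is $C^1$, isotopic to the identity, and its restriction to $d$ is a carrying map, so $\eta$ carries $d$; and since the fan is drawn to meet $c$ only at $s$ while $d_0$ is disjoint from $c$, we have $\eta\cap c=\{s\}$.

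That $\eta$ is a genuine train track follows from $c,d$ binding $S$: every complementary component of $c\cup d$ is a polygon or once-punctured polygon with at least $4$, respectively at least $2$, sides, and the combing only subdivides these regions further and creates cusps at $s$, so choosing the fan generically one never produces a monogon, bigon, smooth annulus or smooth once-punctured disc. For recurrence I would use $d$ itself: being carried by $\eta$, the curve $d$ runs over each of the $i(c,d)$ loops exactly once, so assigning weight $1$ to every branch gives a transverse measure — the switch condition holds because there are $i(c,d)$ half-branches on each side of $s$ — that is positive on every branch. Hence $\eta$ is recurrent.

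For the last assertion I would argue the contrapositive. Suppose $c,d$ do not completely fill $S$, so some complementary component $R$ of $c\cup d$ is a polygon with at least $8$ sides or a once-punctured polygon with at least $6$ sides; in either case $\partial R$ contains at least $4$, respectively at least $3$, arcs of $c$. Tracking $R$ through the combing, its sides lying on $d$ survive in $\eta$, while each side of $R$ lying on $c$ is swept into a thin wedge at $s$ bounded by the combed extensions of the two adjacent $d$-sides of $R$, and this wedge carries a cusp of $\eta$ at $s$. Consequently the complementary region of $\eta$ containing $R$ has at least $4$ cusps in the polygon case, and is a once-punctured region with at least $3$ cusps in the punctured case, so it is neither a trigon nor a once-punctured monogon; thus $\eta$ is not maximal. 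I expect this last step to be the main obstacle: one must verify, in terms of the combinatorics of the fan at $s$, that distinct $c$-sides of a complementary component of $c\cup d$ really do contribute distinct cusps to the corresponding complementary region of $\eta$, and exclude the degenerations (several $c$-sides feeding a single cusp, or the region losing its disc, respectively once-punctured disc, type) that a careless choice of combing might introduce.
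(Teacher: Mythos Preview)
Your construction is the same one the paper uses --- the paper describes it as ``contract $c\setminus I$ to a point'' for a chosen subarc $I\subset c\setminus d$, which is exactly your combing through a point $s\in I$ --- and your recurrence argument via the weight-one measure coming from $d$ is correct. Two points separate your write-up from a complete proof.

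First, a minor one: your claim that a generic fan avoids bigons is not right. A rectangular ($4$-sided) component of $S\setminus(c\cup d)$ has two $c$-sides, each of which is swept to a cusp at $s$; the resulting two-cusped region is a bigon regardless of how the fan is drawn. The paper acknowledges this by first producing a \emph{bigon track} $\hat\sigma$ and then collapsing its bigons to get the train track $\sigma$.

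Second, and this is the real gap you sensed, your cusp count for the large region $R$ is not correct as stated, though not for the reason you anticipate. Distinct $c$-sides of $R$ \emph{do} yield distinct cusps --- except that exactly one arc of $c\setminus d$, namely the arc $J$ containing $s$, does not become a cusp at all. The two $d$-arcs abutting $J$ are the outermost strands of the fan, and at $s$ the region on one side of $J$ opens through the smooth side of the switch into the region on the other side of $J$. Thus the two components $C,C'$ of $S\setminus(c\cup d)$ bordering $J$ merge into a single region $D$, each losing its expected $J$-cusp. If your large region $R$ happens to be one of $C,C'$, your argument breaks down. The paper does not try to avoid this merging; it \emph{uses} it: it chooses the arc $I$ (your $J$) deliberately to lie on the boundary of the large component $C$, and then carries out a short case analysis on the neighbour $C'$ (polygon, once-punctured polygon, or $C'=C$) to show that the merged region $D$ has at least four cusps, or is once-punctured with at least two, or contains an essential curve. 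That case analysis on $D$ is the missing ingredient.
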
  
\begin{proof} Using the notations from the lemma, 
choose a component $I$ of $c\setminus d$  
contained in the boundary of a polygonal component of 
$S\setminus (c\cup d)$  with at least 
$6$ sides or a once punctured polygonal
component of $S\setminus (c\cup d)$ with at least four sides. 
Such a component $C$ always exists by reasons of Euler characteristic.
If the distance between $c,d$ in ${\cal P\cal C}(S)$ equals one, then
we may assume that $C$ is a polygonal component with at least $8$ sides or a once
punctured polygonal component with at least $4$ sides.
Contract $c\setminus I$ to a point. The graph $G$ obtained from $c\cup d$ in this way
has a single vertex $p$ which is the image of $c\setminus I$. Furthermore, it 
intersects $c$ in a single point. 
Impose a switch structure
at the vertex $p$ as follows. 

Declare all half-edges of $G$ which are subarcs of $d$ and leave 
$c$ to a fixed side to be incoming, and declare the half-edges of $G$ which are
subarcs of $d$ and leave $c$ to the opposite side to be outgoing. 
The result of this construction is
a \emph{bigon track} $\hat \sigma$, that is, a graph which has all properties of a train track
except that it may contain bigons. These bigons can be 
collapsed to yield a train track $\sigma$ 
which carries $d$  and intersects $c$ in a single point. As $\sigma$ carries the simple closed
curve $d$ and is filled by $d$, that is, a carrying map $d\to \sigma$ to surjective, 
the train track $\sigma$ is recurrent.

Let us inspect the complementary regions of $\hat \sigma$. 
If $E$ is a component of $S\setminus (c\cup d)$ with $2\ell$ sides not containing $I$, 
then each side $e$ of $E$ contained in $c$ is contracted to a point
in the construction of $\hat \sigma$, and the two sides of $E$ adjacent to $e$ 
meet at a cusp of the complementary region of $\hat \sigma$ which is the collapse of 
$E$. Thus any complementary polygon (or once punctured complementary polygon) 
of $S\setminus (c\cup d)$ with $2\ell$ sides not containing $I$ 
gives rise to a complementary component of $\hat \sigma$ which is a topological disk 
(or once punctured disk) with $\ell$ cusps in the boundary. 
Complementary quadrangles collapse to complementary bigons.

The side $I$ of the component $C$ is removed, and the component $C$ merges with 
the second component $C^\prime$ of $S\setminus (c\cup d)$ which contains $I$ in its boundary
to a complementary component $D$ of $\hat \sigma$. 
To analyze this component, we distinguish three cases. 

 If $C^\prime=C$ then $D$ contains an essential
 simple closed curve.  Namely, in this case 
 there is a simple closed curve in $C\cup I$ which intersects $c$ in a single point 
 contained in $I$, and this simple closed curve is contained in the complementary
 region $D$ of $\hat \sigma$. 

Now let us assume that $C$ is a polygon without puncture
and $2\ell\geq 6$ sides and that $C^\prime\not=C$. 
If $C^\prime$ is a polygon 
 with $2k\geq 4$ sides, then $D$ is disk with 
$\ell+k-2\geq \ell$ cusps in the boundary. 
 Similarly, if $C^\prime$ is a once punctured polygon with 
 $2k\geq 2$ sides, then $D$ is a once punctured disk with $\ell+k-2\geq \ell-1$ 
 cusps in the boundary.

If both $C,C^\prime$ are once punctured polygons, 
then the component $D$ contains two punctures and hence it contains
an essential simple closed curve on $S$ surrounding the punctures.

 As a consequence, if $C$ is a polygon with at least $8$ sides
 or a once punctured polygon with at least $4$ sides, then 
 the bigon track $\hat \sigma$ and hence the train track $\sigma$ contains a complementary
 component which either contains an essential simple closed curve, or is a polygon with at least
 $8$ sides, or is a once punctured polygon with at least $4$ sides.
 In particular, $\sigma$ is not maximal. 
This completes the proof of the lemma.
\end{proof}  

Denote by 
\[\Psi:{\cal C\cal G}(S)\to {\cal P\cal C}(S)\] the map induced by the vertex inclusion.
This map is one-Lipschitz if $S$ is distinct from a five punctured sphere or a twice punctured torus
and is two-Lipschitz otherwise. 
An \emph{unparameterized $L$-quasi-geodesic} in a geodesic metric space $X$ is a map
$\psi:[a,b]\to X$ with the property that there exists an increasing homeomorphism
$\rho:[0,c]\to [a,b]$ such that $\psi \circ \rho:[0,c]\to X$ is an $L$-quasi-geodesic.

A \emph{vertex cycle} of a recurrent train track $\tau$ is
an immersed simple closed curve in $\tau$ which is an extreme
point for the cone of all transverse measures for $\tau$.
Such a vertex cycle either is an embedded simple closed curve in
$\tau$ or a dumbbell. In particular, a closed trainpath defined
by a vertex cycle passes through any branch at most twice.
As a consequence, the geometric intersection number between any
two vertex cycles on $\tau$ is uniformly bounded, and 
there is a coarsely well defined map
\begin{equation}\label{upsilon}
  \Upsilon:{\cal T\cal T}\to {\cal C\cal G}(S)\end{equation}
which associates to a train track one of its vertex cycles. 
Here coarsely well defined means that $\Upsilon$ depends on choices,
but any two choices give rise to maps which map a given point to
images of uniformly bounded distance.
We refer to \cite{H06} for more information on this construction.  We use Lemma \ref{oneswitch}
to show.

\begin{proposition}\label{ccgraphqtree}
The principal curve graph ${\cal P\cal C}(S)$ is hyperbolic. Geodesics
in ${\cal C\cal G}(S)$ map by $\Psi$  
to uniform unparameterized quasi-geodesics in ${\cal P\cal C}(S)$. 
\end{proposition}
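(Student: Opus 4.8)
The plan is to verify a standard local-to-global criterion for hyperbolicity (a version of Bowditch's ``guessing geodesics'' lemma): if $f\colon X\to Y$ is a coarsely Lipschitz, coarsely surjective map from a hyperbolic geodesic space $X$ to a geodesic space $Y$, and if there is a constant $R$ so that for all $x,x'\in X$ with $d_Y(f(x),f(x'))\le 1$ the $f$-image of an $X$-geodesic from $x$ to $x'$ has diameter at most $R$ in $Y$, then $Y$ is hyperbolic and $f$ carries $X$-geodesics to uniform unparameterized quasi-geodesics of $Y$. We apply this with $X={\cal C\cal G}(S)$, hyperbolic by \cite{MM99}, with $Y={\cal P\cal C}(S)$, and with $f=\Psi$, which is coarsely Lipschitz (one- or two-Lipschitz) and surjective on vertices. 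The slim triangles hypothesis is automatic: a side of a geodesic triangle in ${\cal C\cal G}(S)$ lies in a uniform neighbourhood of the union of the other two sides, and $\Psi$ transports this bound to ${\cal P\cal C}(S)$. So everything reduces to the assertion: \emph{if two vertices $c,d$ are joined by an edge of ${\cal P\cal C}(S)$, then any geodesic of ${\cal C\cal G}(S)$ from $c$ to $d$ has uniformly bounded diameter in ${\cal P\cal C}(S)$.}

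We may assume $d_{{\cal C\cal G}(S)}(c,d)\ge 3$, since otherwise such a geodesic has at most three vertices and there is nothing to prove. Then $c,d$ bind $S$, and as they are adjacent in ${\cal P\cal C}(S)$ they do not completely fill it. Lemma \ref{oneswitch} now provides a recurrent one-switch train track $\eta=\eta(c,d)$ which is filled by $d$, meets $c$ in a single point, and is not maximal: it has a complementary region $D$ which either contains an essential simple closed curve, which is impossible here as it would force $d_{{\cal C\cal G}(S)}(c,d)\le 2$, or is a polygon, respectively once-punctured polygon, with enough sides that the thresholds in the definition of ${\cal P\cal C}(S)$ apply. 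The crucial point, and exactly the reason the numbers $8$ and $6$ occur in that definition, is that the presence of $D$ forces any two simple closed curves $v,w$ carried by $\eta$ to be adjacent in ${\cal P\cal C}(S)$: $v\cup w$ can be realized in minimal position inside an arbitrarily small neighbourhood of $\eta$, hence disjoint from $D$, so $S\setminus(v\cup w)$ has a complementary component containing $D$, and a verification that uses that $d$ fills $\eta$ and that hinges on the numerical thresholds shows that this component certifies an edge of ${\cal P\cal C}(S)$ between $v$ and $w$. In particular $d_{{\cal P\cal C}(S)}(v,d)\le 1$ for every simple closed curve $v$ carried by $\eta$. Moreover a vertex cycle $v_0$ of $\eta$ crosses each branch at most twice and meets $c$ only where it crosses the unique branch of $\eta$ that $c$ meets, so $i(c,v_0)\le 2$ and hence $d_{{\cal C\cal G}(S)}(c,v_0)=O(1)$.

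It remains to interpolate between $v_0$ and $d$ by curves carried by $\eta$ in a way that is efficient in ${\cal C\cal G}(S)$. For this I would fix a splitting sequence $\eta=\eta_0\succ\eta_1\succ\cdots\succ\eta_N$ in which every $\eta_i$ carries $d$ and $\eta_N$ is filled solely by $d$; each $\eta_i$, being carried by $\eta$, has all of its vertex cycles carried by $\eta$ as well. Choose a vertex cycle $v_i$ of $\eta_i$, with $v_N=d$. Consecutive $v_i,v_{i+1}$ have uniformly bounded geometric intersection number because $\eta_{i+1}$ is obtained from $\eta_i$ by a single split, and, by the result of Masur and Minsky developed in the train track setting in \cite{MM04,H06}, the sequence $(v_i)$ is a uniform unparameterized quasi-geodesic in ${\cal C\cal G}(S)$; being such, and since $d_{{\cal C\cal G}(S)}(c,v_0)=O(1)$, it lies within uniformly bounded Hausdorff distance of the chosen geodesic from $c$ to $d$. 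Hence every vertex of that geodesic lies within uniformly bounded ${\cal C\cal G}(S)$-distance, a fortiori within uniformly bounded ${\cal P\cal C}(S)$-distance, of some $v_i$, while $d_{{\cal P\cal C}(S)}(v_i,d)\le 1$; so the geodesic has uniformly bounded diameter in ${\cal P\cal C}(S)$. This verifies the criterion and yields both the hyperbolicity of ${\cal P\cal C}(S)$ and the statement about images of curve-graph geodesics.

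The step I expect to be the main obstacle is the second hypothesis of the criterion, and within it two points deserve care. The first is the claim that a curve whose complement contains the large region $D$ of $\eta$ is ${\cal P\cal C}(S)$-adjacent to $d$: this is a book-keeping argument about complementary polygons, and it is precisely where the constants in the definition of ${\cal P\cal C}(S)$ must be matched against the complementary regions produced by Lemma \ref{oneswitch}. The second is that the train tracks $\eta=\eta(c,d)$ and the $\eta_i$ must be transversely recurrent, not merely recurrent, for the quasi-geodesic property of $(v_i)$ to apply; if the construction of Lemma \ref{oneswitch} does not already deliver this, one replaces $\eta(c,d)$ by a transversely recurrent train track that still carries $d$ and still has a complementary region large enough to run the argument.
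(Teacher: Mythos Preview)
Your proposal is correct and follows essentially the same route as the paper: both invoke the Kapovich--Rafi criterion (which you phrase as a Bowditch-type guessing-geodesics lemma), reduce to showing that curves adjacent in ${\cal P\cal C}(S)$ are joined by a ${\cal C\cal G}(S)$-quasi-geodesic of bounded ${\cal P\cal C}(S)$-diameter, and discharge this via Lemma~\ref{oneswitch} together with a splitting sequence toward $d$ whose vertex cycles are all carried by the non-maximal track and hence ${\cal P\cal C}(S)$-adjacent to $d$. Your aside dismissing the case where $D$ contains an essential curve is unnecessary (and not obviously correct), but harmless, since that case also yields the adjacency you need; your closing concerns about the numerical thresholds and transverse recurrence are real bookkeeping points that the paper simply asserts or elides.
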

\begin{proof} The proposition follows from 
the work of Kapovich and Rafi \cite{KR14} if
we can verify that the conditions in Corollary 2.4 of
\cite{KR14} are fulfilled. For this it suffices to show 
the existence of a number $L>1$ with the following property. 
Whenever $c,d$ are curves whose distance in the principal curve
graph is one, then there exists an $L$-quasi-geodesic 
$\rho:[0,m]\to {\cal C\cal G}(S)$ connecting $\rho(0)=c$ to 
$\rho(m)=d$ 
such that the diameter of 
$\Psi(\rho[0,m])\subset {\cal P\cal C}(S)$ is at most $L$.

This is obvious if
$c, d$ do not bind $S$ as in this case, their distance
in the curve graph is at most two. 
If $c, d$ bind $S$ then 
we construct such a quasi-geodesic using the fact that so-called 
\emph{splitting sequences} of
train tracks on $S$ 
are mapped by the map $\Upsilon$ to uniform unparameterized 
quasi-geodesics in ${\cal C\cal G}(S)$ \cite{H06}. 

Let $\sigma$ be a one-switch train track constructed from $c,d$ 
as in Lemma \ref{oneswitch}.
Then $\sigma$ has a complementary component different from a
trigon or a once punctured monogon. 
In particular, any two simple closed curves
carried by $\sigma$ have distance at most  one in ${\cal P\cal C}(S)$. 
Furthermore, the distance in ${\cal P\cal C}(S)$ 
between a vertex cycle of $\sigma$ and the curve $c$ 
is uniformly bounded since this holds true for their distance
in the curve graph.

By Theorem 2.4.1 of \cite{PH92}, 
there is a \emph{splitting and collision} sequence $\sigma_i$ 
issuing from $\sigma$ 
which consists of train tracks which carry $d$ and
which connects $\sigma$ to 
the train track which consists of the single curve $d$. 
Each of the train tracks $\sigma_i$ is carried by $\sigma$. 
Associating to each train track $\sigma_i$ one of its
vertex cycles $c_i$ 
defines a uniform unparameterized quasi-geodesic in 
${\cal C\cal G}(S)$ \cite{H06} 
which connects a vertex cycle $c_0$ of 
$\sigma=\sigma_0$, that is, a 
simple closed curve in a uniformly bounded
neighborhood of $c$ in ${\cal C\cal G}(S)$, 
to the curve $d$. Since each of the curves $c_i$ is carried 
by $\sigma$,
this quasi-geodesic consists of curves whose
distance to $d$ in the principal curve graph equals at most one.

Hyperbolicity of the principal curve graph now follows
from Corollary 2.4 of \cite{KR14}. 
This result also shows that geodesics in ${\cal C\cal G}(S)$
map to uniform 
unparameterized quasi-geodesic
in ${\cal P\cal C}(S)$.
This is what we wanted to show.
 \end{proof}

 Proposition \ref{ccgraphqtree} does not imply that the
 principal curve graph has infinite diameter. We next show that
 this is indeed the case. 
 
 A \emph{measured geodesic lamination} is a geodesic lamination
equipped with a transverse invariant measure. A projective
measured geodesic lamination is an equivalence class of measured
laminations which are obtained from each other by scaling.
The space ${\cal P\cal M\cal L}$ of projective measured geodesic
laminations, equipped with the weak$^*$-topology, is compact. 
A simple closed curve admits a unique transverse
measure up to scale and hence 
can be viewed as a projective
measured geodesic lamination.

By a construction due to Thurston and Veech, simple closed 
 curves $c,d$ which bind $S$ determine a line of 
 \emph{area one
 holomorphic quadratic differentials} on
 $S$ with horizontal and 
 vertical measured geodesic lamination
 supported in $c,d$, respectively. These differentials define
the cotangent line of a \emph{Teichm\"uller geodesic}. If $c,d$ 
are connected by an edge in the principal curve graph, then 
these quadratic differentials either have a zero of order at least two, or one of the 
marked points (punctures) is a regular point or a zero of the differential
and not a simple pole. We refer to \cite{H24} for a more comprehensive discussion.

 Using compactness of the space
${\cal P\cal M\cal L}$ 
of projective measured geodesic laminations we observe.

\begin{lemma}\label{converge}
  Let $(c_i,d_i)$ be a sequence of pairs
  of simple closed curves
such that $d_{{\cal P\cal C}(S)}(c_i,d_i)\leq 1$ for all $i$. 
Assume that the 
sequence $c_i$ converges as $i\to \infty$ in ${\cal P\cal M\cal L}$ 
to a projective measured geodesic lamination 
whose support $\mu$ is both minimal and maximal. 
Then up to passing to a subsequence,
the sequence $d_i$ converges to a projective 
measured geodesic lamination with support $\mu$.
\end{lemma}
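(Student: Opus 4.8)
The plan is to argue by contradiction using the Thurston--Veech construction together with the structure of the complementary components that an edge in the principal curve graph forces. Suppose $d_{{\cal P\cal C}(S)}(c_i,d_i)\le 1$ and $c_i\to [\mu]$ in ${\cal P\cal M\cal L}$ with $\mu$ minimal and maximal, but that no subsequence of $d_i$ converges to a projective lamination supported on $\mu$. By compactness of ${\cal P\cal M\cal L}$ we may pass to a subsequence so that $d_i\to[\xi]$ for some projective measured lamination $[\xi]$ whose support $\xi$ is \emph{not} equal to $\mu$. Since $\mu$ is minimal and maximal, it is a maximal geodesic lamination admitting no proper minimal sublamination other than itself; hence $\xi\ne\mu$ means either $\xi$ has a leaf transverse to $\mu$, or $\xi$ is a proper sublamination of some maximal lamination disjoint in an essential way from $\mu$. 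In either case one can extract the conclusion that the geometric intersection number pairing distinguishes $\xi$ from $\mu$, and in particular $i(c_i,d_i)/\bigl(\|c_i\|\,\|d_i\|\bigr)$ (suitably normalised, e.g. measuring with respect to a fixed hyperbolic metric or a fixed filling pair of laminations) stays bounded away from both $0$ and $\infty$ along the subsequence, reflecting the fact that $\mu$ and $\xi$ bind $S$ together.

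The heart of the argument is to show that this is incompatible with $d_{{\cal P\cal C}(S)}(c_i,d_i)\le 1$. Here I would use the geometric meaning of an edge of ${\cal P\cal C}(S)$: either $c_i,d_i$ do not bind $S$, or $S\setminus(c_i\cup d_i)$ has a complementary component that contains an essential simple closed curve, or is a polygon with at least $8$ sides, or is a once-punctured polygon with at least $6$ sides. In each of these cases the pair $(c_i,d_i)$ is ``degenerate'' in a controlled way. When $c_i,d_i$ do not bind, some essential subsurface is disjoint from both, and one extracts a bounded-length curve disjoint from $c_i$, forcing $c_i$ (in the limit) to be non-filling, contradicting that $\mu$ is minimal and maximal (hence filling). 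When there is a large complementary polygon, the Thurston--Veech quadratic differential associated to $(c_i,d_i)$ has a zero of order $\ge 2$ or a puncture that is not a simple pole; passing to the limit of the flat structures (after rescaling) one obtains a quadratic differential whose horizontal foliation is $\mu$ but which is \emph{not} in the principal stratum. Since $\mu$ is both minimal and maximal, the unique (up to scale) quadratic differential realising $\mu$ as its horizontal foliation must lie in the principal stratum, because a maximal geodesic lamination can only be the horizontal foliation of a differential all of whose singularities are simple zeros and simple poles (the complementary ideal triangles and once-punctured monogons of $\mu$ correspond exactly to $3$-pronged singularities and $1$-pronged singularities at punctures). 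This contradiction forces $\xi=\mu$.

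Concretely I would organise the steps as: (1) pass to a subsequence with $d_i\to[\xi]$; (2) observe $\mu$ filling, so for $i$ large $c_i$ and $d_i$ bind $S$ (otherwise a bounded curve obstructs filling in the limit); (3) therefore for large $i$ the edge condition means $S\setminus(c_i\cup d_i)$ has a complementary component that is a polygon with $\ge 8$ sides, a once-punctured polygon with $\ge 6$ sides, or contains an essential curve; in the last case again bind fails or a bounded curve appears, so discard it for large $i$; (4) form the Thurston--Veech flat structure $q_i$ normalised to area one, with horizontal lamination $c_i$ and vertical lamination $d_i$, noting $q_i$ has a singularity of cone angle $\ge 4\pi$ (or a puncture that is not a simple pole); (5) by compactness of the space of area-one quadratic differentials over a fixed point of Teichm\"uller space together with continuity of the horizontal/vertical foliation maps, pass to a limit $q_\infty$ with horizontal foliation $\mu$; by semicontinuity of the stratum, $q_\infty$ still has a singularity of cone angle $\ge 4\pi$ or a bad puncture; (6) but $\mu$ minimal and maximal forces the unique area-one $q_\infty$ with horizontal foliation $\mu$ to lie in the principal stratum, a contradiction. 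Hence $\xi=\mu$, so \emph{every} subsequential limit of $d_i$ is supported on $\mu$, which gives the claim.

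The main obstacle I anticipate is step (5): controlling the limit of the flat structures $q_i$ requires care because the underlying conformal structures may degenerate (the points in Teichm\"uller space tangent to these Teichm\"uller geodesics need not stay in a compact set). I would handle this by working instead directly with the measured laminations and the intersection-number formula: the fact that $(c_i,d_i)$ admits a large complementary polygon can be detected by a lower bound on $i(c_i,d_i)$ relative to the ``expected'' intersection number of a principal pair, or equivalently by the existence of a vertex cycle of the one-switch track $\eta(c_i,d_i)$ from Lemma \ref{oneswitch} that remains at bounded curve-graph distance from $c_i$ while the track is non-maximal. Pushing the non-maximality through to the Hausdorff limit $\mu$ then contradicts maximality of $\mu$ directly, sidestepping the need for compactness in Teichm\"uller space and making the argument entirely lamination-theoretic.
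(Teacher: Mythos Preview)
Your overall strategy is the same as the paper's: argue by contradiction, pass to a subsequential limit $[\xi]\neq[\mu]$ in ${\cal P\cal M\cal L}$, build the Thurston--Veech quadratic differentials for the pairs $(c_i,d_i)$, observe that the edge condition in ${\cal P\cal C}(S)$ forces these differentials out of the principal stratum, take a limit, and contradict the fact that a differential whose horizontal lamination is minimal and maximal must lie in the principal stratum.

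The gap is in your step (5), and you have already written down the observation that closes it. You note in your first paragraph that $\mu$ and $\xi$ bind $S$ (since $\mu$ is minimal and maximal and $\xi\neq\mu$, every measured lamination intersects at least one of them). This is exactly what guarantees convergence: the pair $(\rho,\xi)$ determines a genuine Teichm\"uller geodesic $\gamma$, and because binding is an open condition on pairs in ${\cal P\cal M\cal L}\times{\cal P\cal M\cal L}$, for all large $i$ the pair $(c_i,d_i)$ also binds and determines a Teichm\"uller geodesic $\gamma_i$. Continuity of the map from binding pairs to Teichm\"uller geodesics then gives $\gamma_i\to\gamma$ locally uniformly in the bundle of area one quadratic differentials over Teichm\"uller space. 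No degeneration occurs precisely \emph{because} of the contradiction hypothesis $\xi\neq\mu$. Since the complement of the principal stratum is closed and invariant under the Teichm\"uller flow, $\gamma$ lies outside the principal stratum, contradicting maximality of $\mu$. This is how the paper proceeds, and it makes your worry about degenerating conformal structures and your proposed lamination-theoretic workaround unnecessary.

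Two smaller points. First, your case analysis in steps (2)--(3) is not needed: once you know $(\rho,\xi)$ binds, openness gives that $(c_i,d_i)$ binds for large $i$, so the ``contains an essential curve'' and ``do not bind'' cases simply do not occur along the tail of the sequence. Second, your phrasing ``compactness of the space of area-one quadratic differentials over a fixed point of Teichm\"uller space'' is the wrong compactness; the base point moves, and the correct statement is the continuity just described.
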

\begin{proof} Let $\rho\in {\cal P\cal M\cal L}$ be the limit of 
the sequence $c_i$. The support of $\rho$ equals $\mu$. 
Using the notations from the lemma,  
by compactness and passing to 
a subsequence of the sequence $d_i$, we may assume that
$d_i$ converges in ${\cal P\cal M\cal L}$ to
a projective measured geodesic lamination $\xi$.

We argue by contradiction and assume that 
the support of $\xi$ is distinct from $\mu$.
Since $\mu$ is minimal and maximal, this implies that
$\xi$ together with the projective measured geodesic lamination
$\rho$ determines a Teichm\"uller geodesic
whose cotangent line $\gamma$
consists of area one quadratic differentials with 
vertical and horizontal projective measured geodesic laminations
$\rho,\xi$, respectively.

Namely, two projective measured geodesic laminations $\rho,\xi$ 
determine a
Teichm\"uller geodesic if and only if for every
measured geodesic lamination $\zeta$, the 
intersection between $\zeta$ and one of the two laminations 
$\rho,\xi$ is positive. 
Note that this property is invariant under scaling
and hence makes sense for
projective measured geodesic laminations.
As this is an open condition (for example, by continuity of the intersection form), we conclude
that for sufficiently large $i$ the pair 
$(c_i,d_i)$ binds $S$ and hence 
defines the cotangent line $\gamma_i$ 
of a Teichm\"uller geodesic, that is, an orbit of the \emph{Teichm\"uller flow}
on the bundle of area one quadratic differentials. 
Furthermore, by continuity, the cotangent lines $\gamma_i$  
of these Teichm\"uller geodesics  converge locally
uniformly to $\gamma$ in the bundle over
Teichm\"uller space whose fiber over a Riemann surface $X$ is the
sphere of area one quadratic differentials on $X$.

On the other hand, as the distance in the principal curve
graph between $c_i,d_i$ equals one, the cotangent line $\gamma_i$ 
is \emph{not} contained in the \emph{principal
stratum} of quadratic differentials with only simple zeros
and simple poles at the marked points 
(here we view an abelian differential as a quadratic differential
with all zeros of even order). 
As the complement of the principal stratum in the
Teichm\"uller space of quadratic differentials is closed 
and invariant under the Teichm\"uller flow,
the cotangent line of the limiting Teichm\"uller geodesic
is contained in the complement of
the principal stratum as well. But 
any quadratic differential whose
horizontal measured lamination is supported in a minimal 
complete geodesic lamination is contained in the principal stratum. 
This is a contradiction which shows the lemma.
\end{proof}

We are now ready to show that the diameter
of ${\cal P\cal C}(S)$ is infinite and complete the proof of 
Proposition \ref{hyperbolicity}. 
 %

\begin{proposition}\label{infinitediam}
The diameter of ${\cal P\cal C}(S)$ is infinite.
\end{proposition}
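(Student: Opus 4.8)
The plan is to argue by contradiction, using Lemma \ref{converge} as the engine. Suppose $\operatorname{diam}{\cal P\cal C}(S)=N<\infty$. I will produce, for arbitrarily large $n$, a chain of $N+1$ curves of consecutive ${\cal P\cal C}(S)$-distance at most one whose first curve converges in ${\cal P\cal M\cal L}$ to a minimal and maximal lamination $\mu^+$ and whose last curve converges to a \emph{different} minimal and maximal lamination $\mu^-$. Repeatedly applying Lemma \ref{converge} along such a chain will force the support of the limit to propagate from one end to the other, so that $\mu^-=\mu^+$, which is absurd.

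The source of the two laminations is a pseudo-Anosov element. It is well known that every non-exceptional surface $S$ carries a pseudo-Anosov mapping class $\phi$ lying in the principal stratum, i.e.\ whose invariant area-one quadratic differential has only simple zeros in the interior and simple poles at the punctures (for instance via a suitable Thurston--Veech pair, cf.\ \cite{H24}, or simply because the principal stratum is non-empty and carries periodic Teichm\"uller geodesics). Let $\mu^+\neq\mu^-$ be the unstable and stable geodesic laminations of $\phi$. Being invariant laminations of a pseudo-Anosov map, $\mu^+$ and $\mu^-$ are minimal; and since $\phi$ lies in the principal stratum, all complementary regions of $\mu^\pm$ are ideal triangles (from the $3$-pronged singularities at the simple zeros) or once-punctured monogons (from the $1$-pronged singularities at the simple poles), so $\mu^+$ and $\mu^-$ are also maximal. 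Fix an arbitrary vertex $c$ of ${\cal P\cal C}(S)$. Since $c$, regarded as a projective measured lamination, is neither $\mu^+$ nor $\mu^-$ (these laminations fill $S$), the north--south dynamics of $\phi$ on ${\cal P\cal M\cal L}$ gives $\phi^n(c)\to\mu^+$ and $\phi^{-n}(c)\to\mu^-$ in ${\cal P\cal M\cal L}$ as $n\to\infty$.

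Now assume $\operatorname{diam}{\cal P\cal C}(S)=N<\infty$. For each $n$ choose an edge path in ${\cal P\cal C}(S)$ from $\phi^n(c)$ to $\phi^{-n}(c)$ and pad it with repetitions of its endpoint so that it has the form $\phi^n(c)=e^n_0,e^n_1,\dots,e^n_N=\phi^{-n}(c)$ with $d_{{\cal P\cal C}(S)}(e^n_j,e^n_{j+1})\leq 1$ for every $j\in\{0,\dots,N-1\}$. By compactness of ${\cal P\cal M\cal L}$, after passing to a subsequence of $n$ we may assume that $e^n_j$ converges in ${\cal P\cal M\cal L}$ to some $\rho_j$ for every $j\in\{0,\dots,N\}$; then the support of $\rho_0$ is $\mu^+$ and the support of $\rho_N$ is $\mu^-$. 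I claim, by induction on $j$, that the support of $\rho_j$ is $\mu^+$ for all $j$. Indeed, the support of $\rho_0$ is $\mu^+$, which is minimal and maximal; and if the support of $\rho_j$ is $\mu^+$, then applying Lemma \ref{converge} to the pairs $(e^n_j,e^n_{j+1})$ — which satisfy $d_{{\cal P\cal C}(S)}(e^n_j,e^n_{j+1})\leq 1$ and whose first coordinates converge to a point of ${\cal P\cal M\cal L}$ with minimal and maximal support $\mu^+$ — yields a further subsequence along which $e^n_{j+1}$ converges to a point with support $\mu^+$; but $e^n_{j+1}\to\rho_{j+1}$ already along our subsequence, so the support of $\rho_{j+1}$ is $\mu^+$, which is again minimal and maximal, and the induction continues. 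In particular the support of $\rho_N$ is $\mu^+$, contradicting the fact that it equals $\mu^-\neq\mu^+$. This proves Proposition \ref{infinitediam}, and together with Proposition \ref{ccgraphqtree} it completes the proof of Proposition \ref{hyperbolicity}.

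The only genuinely delicate points are: producing the principal-stratum pseudo-Anosov and checking that both of its invariant laminations are minimal \emph{and} maximal, which is precisely what makes Lemma \ref{converge} applicable at every step of the induction; and the bookkeeping of nested subsequences in that induction. Everything else is formal, given Lemma \ref{converge} and the standard north--south dynamics of pseudo-Anosov maps on ${\cal P\cal M\cal L}$.
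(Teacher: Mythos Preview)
Your proof is correct and follows essentially the same route as the paper's (which it attributes to Luo, cf.\ \cite{MM99}): both argue by contradiction and use Lemma~\ref{converge} to propagate a minimal-and-maximal limit lamination step by step along a bounded-length chain of curves at ${\cal P\cal C}(S)$-distance at most one. The only cosmetic difference is in the setup: the paper fixes one minimal complete lamination $\lambda$, takes a quasi-geodesic ray $c_i\to\lambda$ in ${\cal C\cal G}(S)$, and propagates the limit back to the fixed basepoint $c_0$, whereas you use the two invariant laminations $\mu^\pm$ of a principal-stratum pseudo-Anosov and propagate from $\mu^+$ to $\mu^-$.
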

\begin{proof} The proof of this proposition is a variation of
  an argument of Luo as explained in Section 4.3 of  \cite{MM99}. 

Let $\lambda$ be a minimal complete geodesic lamination.
Then $\lambda$ determines a point in the Gromov boundary 
$\partial {\cal C\cal G}(S)$ of the curve graph ${\cal C\cal G}(S)$.
There exists $p>1$ and an edge path  
$\gamma:[0,\infty)\to {\cal C\cal G}(S)$ which is a 
$p$-quasi-geodesic and connects
a fixed simple closed curve $c_0$ to $\lambda$.

By Proposition \ref{ccgraphqtree}, the assignment $i\to c_i=\gamma(i)$
defines a uniform unparameterized quasi-geodesic in the
principal curve graph. 
Let us assume to the contrary that this quasi-geodesic is of finite
diameter, that is, that 
$d_{{\cal P\cal C}(S)}(c_0,c_i)$ is uniformly bounded.
Then by passing to a subsequence, we may assume that there is
a number $k>0$ so that $d_{{\cal P\cal C}(S)}(c_0,c_i)=k$ for all $i$.
Let $c_i^1\in {\cal P\cal C}(S)$ be a vertex so that 
$d_{{\cal P\cal C}(S)}(c_0,c_i^1)=k-1$  and
$d_{{\cal P\cal C}(S)}(c_i^1,c_i)=1$
for all sufficiently large $i$.

By Lemma \ref{converge},
we know that $c_i^1\to \lambda$ in the Hausdorff topology.
Repeat this argument with the sequence $c_i^1$.
After $k$ such steps we conclude that $c_0\to \lambda$ in the 
Hausdorff topology, which is a contradiction.
\end{proof}

\section{Gromov boundary and the action
  of the mapping class group} \label{gromov}

Since the graph ${\cal P\cal C}(S)$ is hyperbolic of infinite
diameter, its \emph{Gromov boundary} $\partial {\cal P\cal C}(S)$ 
is nontrivial. Furthermore, since ${\cal M\cal C\cal G}$ acts on 
${\cal P\cal C}(S)$ as a group of simplicial isometries, it acts on 
$\partial {\cal P\cal C}(S)$ as a group of
transformations. 

\begin{proposition}\label{gromovboundary}
  The Gromov boundary of the principal curve graph is the
  space of minimal complete geodesic laminations, equipped with the
 Hausdorff topology.
\end{proposition}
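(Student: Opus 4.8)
The plan is to identify $\partial{\cal P\cal C}(S)$ with the space ${\cal C\cal L}_{\min}$ of minimal complete geodesic laminations by constructing mutually inverse, continuous, ${\cal M\cal C\cal G}$-equivariant maps in both directions, using the projection $\Psi:{\cal C\cal G}(S)\to{\cal P\cal C}(S)$ of Proposition \ref{ccgraphqtree} as the main tool. Recall that $\partial{\cal C\cal G}(S)$ is the space of minimal filling laminations with the coarse Hausdorff topology \cite{MM99}, and ${\cal C\cal L}_{\min}$ is its subspace consisting of those that are moreover maximal.

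First I would build a map $\partial\Psi\colon{\cal C\cal L}_{\min}\to\partial{\cal P\cal C}(S)$. Given $\lambda\in{\cal C\cal L}_{\min}$, take a $p$-quasi-geodesic ray $\gamma\colon[0,\infty)\to{\cal C\cal G}(S)$ from a basepoint $c_0$ to $\lambda$; by Proposition \ref{ccgraphqtree}, $\Psi\circ\gamma$ is a uniform unparameterized quasi-geodesic in ${\cal P\cal C}(S)$, and by Proposition \ref{infinitediam} (more precisely, by the argument there via Lemma \ref{converge}) it is of \emph{infinite} diameter, hence defines a point in $\partial{\cal P\cal C}(S)$. I must check this point is independent of the choices of $\gamma$ and $c_0$: two quasi-geodesic rays to the same $\lambda$ are uniformly Hausdorff close in ${\cal C\cal G}(S)$ (since $\partial{\cal C\cal G}(S)$ has a Gromov product), so their $\Psi$-images are uniformly close in ${\cal P\cal C}(S)$, hence define the same boundary point. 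Equivariance is immediate since $\Psi$ is equivariant.

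Next I would show $\partial\Psi$ is surjective and injective. For surjectivity, given $\xi\in\partial{\cal P\cal C}(S)$ represented by a quasi-geodesic ray, lift each vertex to ${\cal C\cal G}(S)$ and pass to a coarse-Hausdorff subsequential limit $\lambda$ in $\overline{{\cal C\cal G}(S)}=\partial{\cal C\cal G}(S)$ (compactness); I must argue $\lambda$ is minimal \emph{and maximal}, which is exactly where Lemma \ref{converge} and the principal-stratum dichotomy in the proof of Lemma \ref{converge} are decisive: if $\lambda$ failed to be maximal, consecutive vertices $c_i$ and $c_{i+1}$ of the ray (at distance one in ${\cal P\cal C}(S)$) would force, in the limit, a Teichm\"uller geodesic outside the principal stratum, yet a minimal non-maximal filling lamination as horizontal foliation is compatible with the principal stratum — wait, more carefully: the argument must show the limit cannot be merely filling without being maximal, using that $d_{{\cal P\cal C}(S)}(c_i,c_{i+1})=1$ and the same contradiction as in Lemma \ref{converge}. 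Then $\partial\Psi(\lambda)=\xi$. For injectivity, if $\lambda\neq\lambda'$ in ${\cal C\cal L}_{\min}$ then their rays in ${\cal C\cal G}(S)$ diverge, and I must show the $\Psi$-images still diverge in ${\cal P\cal C}(S)$; this is the converse direction and is the subtle point — it should follow from Lemma \ref{converge} applied along a would-be common subray, exactly as in the proof of Proposition \ref{infinitediam}, forcing $\lambda=\lambda'$.

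Finally I would verify that $\partial\Psi$ is a homeomorphism onto its image with the Hausdorff topology on ${\cal C\cal L}_{\min}$: continuity in one direction follows because Hausdorff-convergent $\lambda_n\to\lambda$ admit quasi-geodesic rays that fellow-travel longer and longer, so their $\Psi$-images converge in $\partial{\cal P\cal C}(S)$; continuity of the inverse follows from the same fellow-traveling estimate run backwards together with Lemma \ref{converge} to upgrade coarse-Hausdorff convergence to genuine Hausdorff convergence (legitimate precisely because the limit is maximal, so coarse-Hausdorff and Hausdorff topologies agree on ${\cal C\cal L}_{\min}$). The main obstacle I anticipate is injectivity together with continuity of the inverse: one must rule out that two distinct minimal maximal laminations are "electrified together" in ${\cal P\cal C}(S)$, i.e. that adding the principal-stratum edges collapses the boundary. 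This is handled by the Thurston--Veech / principal-stratum argument of Lemma \ref{converge}, which is the technical heart of the section and must be invoked repeatedly.
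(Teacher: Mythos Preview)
Your overall architecture matches the paper's: set up a map from minimal complete laminations into $\partial{\cal P\cal C}(S)$ via $\Psi$-images of quasi-geodesic rays, then verify it is a bijection and a homeomorphism. The construction of the map, the well-definedness check, and your injectivity sketch (which is essentially the Luo-style iteration of Lemma~\ref{converge} already used in Proposition~\ref{infinitediam}; the paper phrases the same fact more cleanly via a biinfinite quasi-geodesic whose two half-rays have infinite diameter) are all fine.

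The genuine gap is surjectivity. You want to show that if $\xi\in\partial{\cal P\cal C}(S)$ is represented by a ray $(c_i)$, then the $\partial{\cal C\cal G}(S)$-limit $\lambda$ is \emph{maximal}, and you try to extract this from Lemma~\ref{converge}. But Lemma~\ref{converge} has maximality of the limit as a \emph{hypothesis}, not a conclusion; it cannot be used to prove that a limit is maximal. Your attempted contradiction via consecutive vertices $c_i,c_{i+1}$ does not go through: both sequences converge to the same $\lambda$, so the pairs $(c_i,c_{i+1})$ do not limit onto a nondegenerate Teichm\"uller geodesic at all, and no principal-stratum obstruction appears. You notice something is wrong (``wait, more carefully''), and indeed the argument cannot be repaired along these lines.

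The paper supplies the missing idea, which is of a different nature: for a minimal filling lamination $\lambda$ that is \emph{not} maximal, one takes a recurrent train track $\xi$ of the same combinatorial type as $\lambda$ (hence with a complementary region that is not a trigon or once-punctured monogon) and a $\lambda$-splitting sequence $(\xi_i)$ issuing from $\xi$. The vertex cycles $c_i$ of $\xi_i$ form a uniform unparameterized quasi-geodesic in ${\cal C\cal G}(S)$ converging to $\lambda$, yet all $c_i$ are carried by the non-maximal track $\xi$, so $d_{{\cal P\cal C}(S)}(c_0,c_i)\le 1$ for every $i$. Thus every non-maximal point of $\partial{\cal C\cal G}(S)$ gives a ray of \emph{bounded} diameter in ${\cal P\cal C}(S)$ and cannot lie in $\partial{\cal P\cal C}(S)$. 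This train-track step is the technical heart of the proposition and is absent from your plan. For the topology, the paper also argues somewhat differently from your fellow-traveling sketch: it shows continuity of the inclusion $\partial{\cal P\cal C}(S)\hookrightarrow\partial{\cal C\cal G}(S)$ by a contradiction using convergence of the associated Teichm\"uller geodesics, rather than Lemma~\ref{converge} directly.
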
  
\begin{proof}
  A point in the Gromov boundary $\partial {\cal C\cal G}(S)$ of
  ${\cal C\cal G}(S)$ can be viewed as an equivalence class of
  uniform quasi-geodesic rays in ${\cal C\cal G}(S)$ where
  two such quasi-geodesic rays are equivalent if their
  Hausdorff distance in ${\cal C\cal G}(S)$ 
  in finite (this is true in the situation
  at hand in spite of the fact that the curve graph is not
  locally finite). 

  Since ${\cal C\cal G}(S)$ and ${\cal P\cal C}(S)$ have the
  same vertices and, by Proposition \ref{ccgraphqtree}, 
  any two points in ${\cal P\cal C}(S)$ can be connected by a uniform
  quasi-geodesic which is a reparameterization of a uniform quasi-geodesic
  in ${\cal C\cal G}(S)$ with the same endpoints, 
   we conclude that
  the Gromov boundary of the principal curve graph is
  a quotient of the subspace of the Gromov boundary
  of ${\cal C\cal G}(S)$ consisting of equivalence classes of
  those quasi-geodesic rays in ${\cal C\cal G}(S)$ 
  whose diameter in ${\cal P\cal C}(S)$
  are infinite.
  The quotient is taken by the equivalence relation
  which identifies two points if they correspond to quasi-geodesic rays
  whose Hausdorff distance in ${\cal P\cal C}(S)$ is finite.

  By Proposition \ref{infinitediam} and its proof, a uniform quasigeodesic 
  in ${\cal C\cal G}(S)$ connecting a basepoint $c$ to 
  a minimal complete geodesic lamination $\lambda$ is an unparameterized
  uniform quasi-geodesic in ${\cal P\cal C}(S)$ of infinite diameter.
  Thus $\lambda$ defines a point in the Gromov boundary of ${\cal P\cal C}(S)$.

We claim that
  two distinct such minimal complete geodesic
laminations define non-equivalent quasi-geodesic rays in
${\cal P\cal C}(S)$ and hence distinct points in the
Gromov boundary of ${\cal P\cal C}(S)$. Namely, such a pair of distinct
points can be connected by 
a biinfinite uniform quasi-geodesic in the
curve graph and hence by Proposition \ref{ccgraphqtree}, by  
a biinfinite unparameterized
quasi-geodesic in ${\cal P\cal C}(S)$.
By Proposition \ref{infinitediam}, its 
two half-rays have infinite diameter in ${\cal P\cal C}(S)$, which
is only possible if these laminations defined disctinct points
in the boundary of ${\cal P\cal C}(S)$.
As a consequence, the subspace of $\partial {\cal C\cal G}(S)$
of all minimal complete geodesic laminations embeds (as a set) 
into the Gromov boundary of ${\cal P\cal C}(S)$.

We show next that a minimal filling geodesic lamination which
is not maximal defines an equivalence class of uniform
quasi-geodesic rays in ${\cal C\cal G}(S)$ which have finite
diameter in ${\cal P\cal C}(S)$.

Thus 
let $\lambda$ be a minimal geodesic lamination which fills $S$ but
which is not maximal, that is, which is not complete.
By the results in Section 3 of \cite{H24}, there exists
a recurrent train track $\xi$ of the same \emph{combinatorial type} as $\lambda$ 
(that is, with complementary regions whose topological types coincide with the 
topological types of the complementary regions of $\lambda$)  
which carries $\lambda$. 
  In particular, at least one
  of the complementary components of $\xi$ is a polygon with more
  than three sides or a once punctured polygon with
  at least two sides.
   
  Let $(\xi_i)$ be a $\lambda$-splitting sequence beginning at
  $\xi_0=\xi$, that is, a splitting sequence so that each $\xi_i$ carries $\lambda$.
  For each $i$ let $c_i$ be a vertex
  cycle of $\xi_i$. As both $c_0,c_i$ are carried by
  $\xi$, there is at least one component of
  $S-(c_0\cup c_i)$ which is a polygon with at least
  $8$ sides or a once punctured polygon with at least four sides.
  This yields that the distance in
  ${\cal P\cal C}(S)$ between $c_0$ and $c_i$ is at most one 
  for all $i$.
  Since $i\to c_i$ is an unparameterized quasi-geodesic in the curve
  graph of $S$ which converges in 
  ${\cal C\cal G}(S)\cup \partial {\cal C\cal G}(S)$ to 
  $\lambda\in \partial {\cal C\cal G}(S)$  \cite{H06}, 
  this implies that $\lambda$ does not define a point 
  in the Gromov
  boundary of ${\cal P\cal C}(S)$.

  To summarize, as a set, the Gromov boundary of ${\cal P\cal C}(S)$ can be identified
  with the subspace $\partial {\cal P\cal C}(S)$ of
  $\partial {\cal C\cal G}(S)$ of all 
  minimal complete geodesic laminations. That this inclusion
  is a homeomorphism onto its image can be seen with
  the same arguments used before. Namely, as the inclusion
  ${\cal C\cal G}(S)\to {\cal P\cal C}(S)$
  is one-Lipschitz, the topology on $\partial {\cal P\cal C}(S)$ 
 as a subspace of $\partial {\cal C\cal G}(S)$ is finer than 
  the topology on $\partial {\cal P\cal C}(S)$ 
  as the Gromov boundary of 
  ${\cal P\cal C}(S)$. Thus it suffices to show that the inclusion
  map $\partial {\cal P\cal C}(S)\to \partial{\cal C\cal G}(S)$ is continuous
  where $\partial {\cal P\cal C}(S)$ is equipped with the topology
  as the Gromov boundary of ${\cal P\cal C}(S)$, and this is equivalent
  to stating that if $\lambda_i\to \lambda$ in
  $\partial{\cal P\cal C}(S)$, then $\lambda_i$ converges to
  $\lambda$ in the Hausdorff topology.

 To establish this claim we argue by contradiction and we assume that
 there exists a sequence $\lambda_i\subset 
 \partial {\cal P\cal C}(S)$ which converges to $\lambda\in \partial {\cal P\cal C}(S)$
 but does not converge to $\lambda$ in $\partial {\cal C\cal G}(S)$.
 By compactness of the space of geodesic
  laminations equipped with the Hausdorff topology, 
  up to passing to a subsequence,
  we may assume that $\lambda_i\to \mu$ in the Hausdorff topology
  where $\mu\not=\lambda$. 
  Assume without loss
  of generality that $\lambda_i\not=\lambda$ for all $i$.
  Equip $\lambda_i,\lambda$ with projective
  transverse measures $\alpha_i,\beta$. Then for each $i$, 
  the pair $(\alpha_i,\beta)$ of projective measured geodesic
  laminations determines a Teichm\"uller geodesic $\gamma_i$.
  
  By passing to another
  subsequence, we may assume that the
  Teich\-m\"ul\-ler geo\-de\-sics $\gamma_i$ converge locally uniformly 
  to a Teichm\"uller geodesic $\gamma$ defined by a pair 
  $(\alpha,\beta)$ of projective
  measured geodesic laminations, 
  where $\alpha$ is supported in $\mu\not=\lambda$.
Now by \cite{MM99}, associating to a point $X$ 
on the Teichm\"uller geodesic $\gamma$ 
a simple closed curve in ${\cal C\cal G}(S)$ whose length for the 
hyperbolic metric $X$ is uniformly bounded defines a uniform unparameterized
quasi-geodesic in ${\cal C\cal G}(S)$. As the Teichm\"uller geodesics
$\gamma_i,\gamma$ all pass through a fixed compact subset of 
Teichm\"uller space, together with 
Proposition \ref{ccgraphqtree}, this implies that
  there are uniform quasigeodesics in ${\cal P\cal C}(S)$ connecting
  $\lambda$ to $\lambda_i$ which pass through a fixed 
  subset of ${\cal P\cal C}(S)$ of uniformly bounded diameter, 
  violating the assumption that
$\lambda_i\to \lambda$ in $\partial {\cal P\cal C}(S)$.  
This completes the proof of the proposition.  
\end{proof}

Since the space of complete geodesic laminations, equipped with the Hausdorff topology, 
is totally disconnected, we obtain.

\begin{corollary}\label{discon}
The Gromov boundary of ${\cal P\cal C}(S)$ is totally disconnected.
\end{corollary}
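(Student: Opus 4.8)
The plan is to read the corollary off directly from Proposition \ref{gromovboundary} together with a topological fact already recorded in Section \ref{theprincipal}. By Proposition \ref{gromovboundary}, the Gromov boundary $\partial {\cal P\cal C}(S)$ is homeomorphic to the space of minimal complete geodesic laminations on $S$, equipped with the Hausdorff topology, and this space is by construction a subspace of the space ${\cal C\cal L}$ of all complete geodesic laminations with the Hausdorff topology.

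Next I would recall why ${\cal C\cal L}$ is totally disconnected, which is exactly the point noted after Lemma 2.3 of \cite{H09}: for every complete train track $\tau$, the set of complete geodesic laminations carried by $\tau$ is both open and closed in ${\cal C\cal L}$, and as $\tau$ ranges over all complete train tracks these clopen sets separate any two distinct complete laminations. Hence the only connected subsets of ${\cal C\cal L}$ are the singletons.

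Finally I would invoke the elementary observation that total disconnectedness is hereditary: if $A$ is any subspace of a totally disconnected space $X$ and $C\subset A$ is connected, then $C$ is connected as a subset of $X$, hence a single point. Applying this with $X={\cal C\cal L}$ and $A$ the subspace of minimal complete laminations, and transporting the conclusion along the homeomorphism of Proposition \ref{gromovboundary}, yields that $\partial {\cal P\cal C}(S)$ is totally disconnected.

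There is no genuine obstacle here: the substance of the corollary is entirely carried by Proposition \ref{gromovboundary}, and the remaining step is the one-line hereditariness of total disconnectedness. The only point worth a moment's care is to work with the Hausdorff topology on the space of minimal complete laminations (not, say, the coarse Hausdorff topology that occurs for the full Gromov boundary of ${\cal C\cal G}(S)$), since it is with respect to this topology that Proposition \ref{gromovboundary} gives a homeomorphism and with respect to which the clopen carrying sets from \cite{H09} are defined.
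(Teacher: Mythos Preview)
Your proposal is correct and follows exactly the paper's approach: the paper deduces the corollary in one line from Proposition \ref{gromovboundary} together with the fact, recorded in Section \ref{theprincipal} via Lemma 2.3 of \cite{H09}, that ${\cal C\cal L}$ is totally disconnected. Your write-up simply makes explicit the hereditariness step that the paper leaves implicit.
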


It seems to be well known that a Gromov hyperbolic space with totally disconnected boundary
and satisfying some additional assumptions like extendibility of uniform quasi-geodesics is 
a quasi-tree, that is, it is quasi-isometric to a tree. As we were not able to find 
references which can be applied to the principal curve graph we provided 
a complete proof of the
following statement which completes the proof of Theorem \ref{hyp}. 

\begin{proposition}\label{quasitree}
The principal curve graph is a quasi-tree.
\end{proposition}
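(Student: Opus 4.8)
The plan is to use the characterization of quasi-trees due to Manning (the \emph{bottleneck property}): a geodesic metric space $X$ is quasi-isometric to a simplicial tree if and only if there is a constant $\Delta>0$ such that for every pair of points $x,y\in X$ there is a midpoint $m=m(x,y)$ with the property that every path from $x$ to $y$ meets the ball $B(m,\Delta)$. Since $\mathcal{PC}(S)$ is a connected graph with the simplicial metric, it is a geodesic metric space, so it suffices to establish the bottleneck property with a uniform constant. This will combine the hyperbolicity from Proposition \ref{ccgraphqtree}, the extendability of uniform quasi-geodesics furnished by the same proposition (geodesics of $\mathcal{CG}(S)$ map to uniform unparameterized quasi-geodesics in $\mathcal{PC}(S)$, and the relevant rays and biinfinite geodesics are available), and above all the total disconnectedness of the Gromov boundary from Corollary \ref{discon}.

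The key steps, in order, are as follows. First I would recall that in a $\delta$-hyperbolic geodesic space every path from $x$ to $y$ passes within bounded distance of any geodesic $[x,y]$ only if that geodesic has the bottleneck property, which in general it need not; the failure is measured exactly by the existence of "fat bigons", i.e. pairs of points joined by two uniform quasi-geodesics that stay far apart for a long time. So the second step is to show that such fat bigons cannot occur in $\mathcal{PC}(S)$. Suppose they did: then one could extract, by a rescaling/limiting argument along a sequence of bigons of diameter tending to infinity, two distinct points $\xi_-,\xi_+$ of $\partial\mathcal{PC}(S)$ together with a continuum of boundary points "between" them — more precisely, the two sides of the fat bigon, being uniform quasi-geodesics that fellow-travel neither each other nor a common geodesic, would force the Gromov boundary to contain a connected arc joining two distinct points, contradicting Corollary \ref{discon}. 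Here I would use Proposition \ref{ccgraphqtree} to realize the two sides of each bigon as reparameterized uniform quasi-geodesics coming from $\mathcal{CG}(S)$, so that the limiting objects are genuine boundary points (minimal complete geodesic laminations, by Proposition \ref{gromovboundary}), and I would use hyperbolicity to control the Gromov products. The third step is to convert the absence of fat bigons into the bottleneck property: given $x,y$, take a geodesic $[x,y]$ and its midpoint $m$; any path $\gamma$ from $x$ to $y$ can be thinned to a geodesic (or a uniform quasi-geodesic) from $x$ to $y$, and by $\delta$-thinness of triangles together with the no-fat-bigons conclusion, this second quasi-geodesic must fellow-travel $[x,y]$ within a uniform constant $\Delta$, so in particular it enters $B(m,\Delta)$; since $\gamma$ contains this quasi-geodesic as a subset up to bounded Hausdorff distance, $\gamma$ meets $B(m,\Delta)$ as well. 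Finally, invoke Manning's theorem to conclude that $\mathcal{PC}(S)$ is a quasi-tree.

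I expect the main obstacle to be the second step: making rigorous the passage from "$\mathcal{PC}(S)$ is not a quasi-tree" (equivalently, arbitrarily fat bigons exist) to a contradiction with total disconnectedness of the boundary. The delicate point is that a pair of uniform quasi-geodesic segments staying far apart does not, on its own, immediately produce a connected subset of $\partial\mathcal{PC}(S)$; one must fill in the region between the two sides and argue that the boundary points of the completing geodesics vary continuously, which requires care because $\mathcal{PC}(S)$ (like $\mathcal{CG}(S)$) is not locally finite and its boundary is not compact. I would handle this by working on the Teichm\"uller-geodesic level as in the proof of Proposition \ref{gromovboundary}: the bigon sides, being uniform quasi-geodesics in $\mathcal{CG}(S)$, shadow Teichm\"uller geodesics, and a continuously varying family of such geodesics (obtained by varying one endpoint lamination) has continuously varying shadows in $\mathcal{PC}(S)$; if these shadows all had infinite diameter in $\mathcal{PC}(S)$ we would get a continuous injection of an interval into $\partial\mathcal{PC}(S)$, contradicting Corollary \ref{discon}, whereas if some shadow had finite diameter in $\mathcal{PC}(S)$ it would pierce the middle of the bigon, again contradicting fatness. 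The remaining steps — Manning's bottleneck criterion and the reduction of arbitrary paths to quasi-geodesics — are routine once this core dichotomy is in place.
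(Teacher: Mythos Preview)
Your approach is genuinely different from the paper's, and it has a real gap.

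\textbf{What the paper does.} The paper also invokes Manning's bottleneck criterion, but it verifies it \emph{directly} via a train-track separation argument rather than through the boundary. The key claim is: if $\tau$ is a complete train track carrying $c$ but not $d$, then \emph{every} edge path in $\mathcal{PC}(S)$ from $c$ to $d$ enters the $1$-neighborhood of some vertex cycle of $\tau$. This is proved by induction on the length of the path in $\mathcal{CG}(S)$, using that a curve disjoint from a curve which fills $\tau$ is again carried by $\tau$. Given a midpoint $e$ of a $\mathcal{PC}(S)$-geodesic from $c$ to $d$, one chooses a marking containing $e$, takes the associated finite family of standard train tracks, and manufactures from it (possibly after splitting) a complete train track carrying $c$ but not $d$ whose vertex cycles are uniformly close to $e$. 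The claim then forces every path from $c$ to $d$ through a uniform neighborhood of $e$. No limiting argument and no appeal to Corollary~\ref{discon} are used.

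\textbf{Where your argument breaks.} Your Step~3 is incorrect as written. The bottleneck property concerns \emph{all} continuous paths from $x$ to $y$, not only uniform quasi-geodesics. You assert that an arbitrary path $\gamma$ ``can be thinned to a geodesic'' and that ``$\gamma$ contains this quasi-geodesic as a subset up to bounded Hausdorff distance''; this is false---an arbitrary edge path can wander arbitrarily far from every geodesic between its endpoints. Showing that uniform quasi-geodesic bigons are uniformly thin (your Step~2) does not by itself yield the bottleneck property; you still need a separation-type argument to handle general paths (e.g.\ tracking, along $\gamma$, the first time a geodesic $[x,\gamma(t)]$ crosses past $m$, and extracting from that a \emph{uniform} quasi-geodesic bigon whose fatness grows with the distance from $\gamma$ to $m$). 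That extra step is exactly where the paper's train-track claim does the work, and it is missing from your outline.

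Your Step~2 is also only a heuristic. The implication ``hyperbolic with totally disconnected Gromov boundary $\Rightarrow$ quasi-tree'' is \emph{not} a theorem for general (non-proper, non-locally-compact) hyperbolic spaces; the paper itself flags that extra hypotheses such as extendability of quasi-geodesics are needed and that no citable reference applies here. Your proposed limiting construction---producing a connected subset of $\partial\mathcal{PC}(S)$ from a sequence of ever-fatter bigons---requires compactness or at least sequential compactness of the relevant families of laminations/Teichm\"uller geodesics, and you have not explained why the limits of the ``in-between'' geodesics land in $\partial\mathcal{PC}(S)$ (i.e.\ are minimal \emph{maximal} laminations) rather than in $\partial\mathcal{CG}(S)\setminus\partial\mathcal{PC}(S)$, nor why the resulting map from an interval to $\partial\mathcal{PC}(S)$ is continuous and nonconstant. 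These are precisely the points at which non-properness of $\mathcal{PC}(S)$ and non-compactness of its boundary bite.
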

\begin{proof}
By \cite{Man05}, we have to verify the so-called \emph{bottleneck property}:
If $\gamma:[0,k]\to {\cal P\cal C}(S)$ is a geodesic edge path of length $k\geq 0$, 
then any edge path
connecting $\gamma(0)$ to $\gamma(k)$ passes through a uniformly bounded neighborhood of 
$\gamma(k/2)=e$. 


To see that this holds true we first establish the following

\smallskip\noindent
{\bf Claim:} Let $c\not=d$ be simple closed curves and let 
$\tau$ be a complete train track which carries $c$ but does not carry $d$. 
Then 
any edge path in ${\cal P\cal C}(S)$ 
connecting $c$ to $d$ passes through the one-neighborhood of some vertex cycle of $\tau$.

\smallskip
\noindent
\emph{Proof of the claim:} We proceed by induction on the length of an edge path $\alpha$ in 
the curve graph ${\cal C\cal G}(S)$ connecting some 
simple closed curve $c$ to some simple closed curve $d$. That this is legitimate follows from the 
fact that ${\cal C\cal G}(S)$ and ${\cal P\cal C}(S)$ have the same set of vertices. 

Assume first that this length equals one. If $c$ does not completely fill $\tau$ then 
 the image of $c$ under a carrying map 
$c\to \tau$ is not surjective. Then this image is a proper subtrack $\sigma$ of 
$\tau$, in particular, $\sigma$ is not maximal. As a consequence,
the distance in ${\cal P\cal C}(S)$ between $c$ and a vertex cycle of 
$\sigma$, which also is a vertex cycle of $\tau$, is at most one. 
 On the other hand, if $c$ completely fills $\tau$, then as $d$ is disjoint from $c$, 
 it follows from Lemma 4.5 of \cite{MM99} that $d$ is carried by $\tau$, a contradiction. This shows the
 claim for curves $c,d$ of distance one in ${\cal C\cal G}(S)$. 

Suppose the claim is true for all edge paths $\alpha:[0,m]\to {\cal C\cal G}(S)$ 
of length at most $m-1$ for some $m\geq 2$. 
Let $\alpha:[0,m]\to {\cal C\cal G}(S)$ be an edge path of length $m$ and let 
$\tau$ be a complete train track carrying $\alpha(0)$ but not $\alpha(m)$.
As before, if $\alpha(0)$ does not 
completely fill the train track $\tau$, then the distance between $\alpha(0)$ and 
a vertex cycle of $\tau$ equals one and we are done. Otherwise 
$\alpha(1)$ is carried by $\tau$ by Lemma 4.5 of \cite{MM99}. We then 
can apply the induction hypothesis 
to the path $\alpha[1,m]$ 
to complete the proof of the claim.
\hfill $\blacksquare$

Let $\gamma, e$ be as in the first paragraph of this proof. Assume without loss of generality that 
$e$ is a vertex of ${\cal P\cal C}(S)$, that is, $e$ is
a simple closed curve on $S$. Choose a pants decomposition $P$ of $S$ containing $e$, and choose moreover 
a system of \emph{spanning curves} for $P$. 
These data define a \emph{marking} of $S$. The marking in turn determines
a finite collection of \emph{train tracks in standard form} for the marking. Each of these train tracks either 
carries $e$ or carries a curve which intersects $e$ in at most
two points and is of distance at most two to $e$ in the 
curve graph ${\cal C\cal G}(S)$. 
Any geodesic lamination $\lambda$ on $S$ is carried by one of these trains tracks,
and if  $\lambda$ is complete, then it is carried by a unique such train track \cite{PH92,H09}. 
Let $\tau_1,\tau_2$ be the two of these train tracks which carry $c,d$. 
If $\tau_1\not=\tau_2$ then a path $\gamma$ connecting $c$ to $d$ passes through the three-neighborhood
of $e$ by the above claim and the choice of $\tau_1,\tau_2$. 

In the case that $\tau_1=\tau_2=\tau$ we modify $\tau$ with a splitting sequence of maximal length
so that the resulting train track $\eta$ still carries both $c,d$. Then no nontrivial split of $\eta$ carries both 
$c,d$. We then can modify 
$\eta$ with a single split so that the split track $\eta^\prime$ carries $c$ but not $d$. 
As the image under $\Upsilon$ of a splitting
sequence is a uniform unparameterized quasi-geodesic in ${\cal C\cal G}(S)$ and
${\cal P\cal C}(S)$,  the image under the map $\Upsilon$ 
of a splitting sequence connecting $\tau$ to $\eta^\prime$ is a subarc of 
a uniform quasi-geodesic connecting $e$ to both $c,d$. Since 
${\cal P\cal C}(S)$ is hyperbolic and $e$ is contained in a geodesic 
connecting $c$ to $d$, this implies that
the distance in ${\cal P\cal C}(S)$ 
between $e$ and a vertex cycle of $\eta^\prime$ is uniformly bounded. 
Thus the proposition follows from the above claim, applied to $c,d$ and $\eta^\prime$. 
\end{proof}

Recall that any pseudo-Anosov mapping class
$\phi\in {\cal M\cal C\cal G}$ has a unique
\emph{axis} in Teichm\"uller space ${\cal T}(S)$ of $S$,
that is, an invariant Teich\"uller geodesic
on which $\phi$ acts as a nontrivial translation.

\begin{corollary}\label{hyperbolic}
  Let $\phi\in {\cal M\cal C\cal G}$ be a pseudo-Anosov mapping class.
  Then $\phi$ acts as a hyperbolic isometry on the
  principal curve graph if and only if 
  the cotangent line of its axis
  is contained in the principal stratum of quadratic differentials
  with only simple zeros.
\end{corollary}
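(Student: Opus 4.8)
The plan is to deduce Corollary \ref{hyperbolic} from Proposition \ref{gromovboundary} together with the identification of the endpoints of a pseudo-Anosov axis with its invariant laminations. The key observation is that a pseudo-Anosov $\phi$ acts as a hyperbolic isometry on a Gromov hyperbolic space $X$ on which it acts by isometries if and only if it has positive translation length, which (for a graph on which $\phi$ acts with unbounded orbits) is equivalent to the orbit map $n\mapsto \phi^n x$ being a quasi-geodesic, which in turn is equivalent to the forward and backward limit laminations defining \emph{distinct} points of the Gromov boundary $\partial{\cal P\cal C}(S)$. So the whole statement reduces to: the attracting and repelling laminations $\lambda^+,\lambda^-$ of $\phi$ each define a point of $\partial{\cal P\cal C}(S)$ if and only if the cotangent line of the axis lies in the principal stratum.

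\textbf{Step 1.} Recall that the axis of a pseudo-Anosov $\phi$ is the Teichm\"uller geodesic determined by the pair $(\lambda^+,\lambda^-)$ of its (projectivized) stable and unstable measured laminations, and that $\lambda^+,\lambda^-$ are minimal and filling. By Proposition \ref{gromovboundary}, a minimal filling lamination defines a point in $\partial{\cal P\cal C}(S)$ if and only if it is maximal, equivalently complete. So I must relate maximality of $\lambda^\pm$ to the stratum of the cotangent line. This is exactly the link already used in the proof of Lemma \ref{converge}: a quadratic differential whose horizontal (or vertical) measured lamination is supported in a minimal complete geodesic lamination lies in the principal stratum, and conversely a differential in the principal stratum has all its separatrix structure simple, so the horizontal and vertical foliations fill $S$ by triangles and once-punctured monogons — i.e.\ the supporting laminations are maximal. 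Thus the cotangent line of the axis lies in the principal stratum if and only if $\lambda^+$ (equivalently $\lambda^-$, by $\phi$-invariance and the fact that $\phi$ preserves the stratum along its axis) is minimal and maximal, hence complete.

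\textbf{Step 2.} Assume the cotangent line is in the principal stratum. Then $\lambda^+$ is minimal and maximal, so by Proposition \ref{gromovboundary} it defines a point $\xi^+\in\partial{\cal P\cal C}(S)$, and similarly $\lambda^-$ defines $\xi^-$; these are distinct since $\lambda^+\ne\lambda^-$ and the inclusion $\partial{\cal P\cal C}(S)\hookrightarrow\partial{\cal C\cal G}(S)$ is injective. A $\phi$-orbit in ${\cal C\cal G}(S)$ is a quasi-geodesic converging to $\lambda^+$ and $\lambda^-$ at its two ends \cite{MM99}; pushing forward by $\Psi$ and applying Proposition \ref{ccgraphqtree}, it is an unparameterized quasi-geodesic in ${\cal P\cal C}(S)$ whose two ends converge to $\xi^+\ne\xi^-$, hence it has infinite diameter. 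Therefore $\phi$ has positive translation length on ${\cal P\cal C}(S)$ and acts as a hyperbolic isometry. Conversely, assume the cotangent line is \emph{not} in the principal stratum. Then $\lambda^+$ is not maximal, so by the argument in the proof of Proposition \ref{gromovboundary} (the $\lambda^+$-splitting sequence construction) every vertex cycle $c_i$ of the splitting sequence satisfies $d_{{\cal P\cal C}(S)}(c_0,c_i)\le 1$, so the $\phi$-orbit of $c_0$ stays in a bounded neighborhood of $c_0$ in ${\cal P\cal C}(S)$; hence $\phi$ acts with bounded orbits and is not hyperbolic (being of infinite order it is then parabolic or elliptic, but in any case not hyperbolic).

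\textbf{The main obstacle} I expect is the bookkeeping in the converse direction: one must be careful that a \emph{single} $\phi$-orbit having bounded diameter is genuinely incompatible with hyperbolicity, and that the splitting-sequence argument of Proposition \ref{infinitediam} and \ref{gromovboundary} can be run starting from a train track of the same combinatorial type as $\lambda^+$ that is invariant up to the $\phi$-action — this uses that $\phi$ has an invariant bi-infinite splitting sequence carrying $\lambda^+$ (Agol; or \cite{H06}), all of whose terms are carried by a fixed non-maximal train track, so that all their vertex cycles lie in a single $1$-ball of ${\cal P\cal C}(S)$ in the sense used there. Once that is set up, everything else is a direct citation of the already-established propositions.
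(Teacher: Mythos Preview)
Your proof is correct and follows essentially the same route as the paper: use Proposition~\ref{gromovboundary} to translate ``axis in the principal stratum'' into ``$\lambda^{\pm}$ minimal and maximal'', then use Proposition~\ref{ccgraphqtree} to push a $\phi$-invariant quasi-geodesic from ${\cal C\cal G}(S)$ to ${\cal P\cal C}(S)$ and read off hyperbolicity or boundedness. The paper phrases this via the short-curve map $\Upsilon_0$ from the Teichm\"uller axis rather than the $\phi$-orbit in ${\cal C\cal G}(S)$, but these are coarsely the same object.

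Two small remarks. First, your ``main obstacle'' is a red herring: you do not need a $\phi$-invariant splitting sequence (Agol or otherwise). Once the splitting-sequence argument of Proposition~\ref{gromovboundary} produces \emph{some} quasi-geodesic ray in ${\cal C\cal G}(S)$ converging to $\lambda^+$ with bounded image in ${\cal P\cal C}(S)$, the forward $\phi$-orbit, being another quasi-geodesic to the same boundary point, lies at bounded Hausdorff distance from it in ${\cal C\cal G}(S)$; since $\Psi$ is Lipschitz, it also has bounded image in ${\cal P\cal C}(S)$. This is exactly how the paper uses Proposition~\ref{gromovboundary} in one line. Second, in Step~1 the equivalence ``principal stratum $\Leftrightarrow$ $\lambda^\pm$ maximal'' uses that the axis carries no horizontal or vertical saddle connections; the paper makes this explicit (periodicity in moduli space forbids them), and you should too.
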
  
\begin{proof}
A point
  $X\in {\cal T}(S)$ is a marked complete finite volume hyperbolic metric on $S$.  
  Define $\Upsilon_0(X)\in {\cal C\cal G}(S)$
  to be a simple closed curve of uniformly bounded $X$-length. Such a curve exists by the 
  Gauss Bonnet theorem which shows that the area of a hyperbolic metric on $S$ is 
  a topological invariant and hence a metric disc of fixed radius has to contain an essential 
  curve as it can not lift to a diffeomorphic disc in the hyperbolic plane. 
  If $\gamma$ is any Teichm\"uller geodesic, then the assignment
  $t\to \Upsilon_0(\gamma(t))$ is a uniform unparameterized
  quasi-geodesic in ${\cal C\cal G}(S)$ \cite{MM99}. 

  Let $\gamma\subset {\cal T}(S)$ be the axis of
  a pseudo-Anosov element $\phi$ and assume that
  the cotangent line of $\gamma$ consists of 
  quadratic differentials in the principal stratum. Then the
  horizontal and
  vertical measured geodesic laminations are minimal and complete
as there can not be any horizontal or vertical saddle connections
on a Teichm\"uller geodesic which projects to a closed curve in 
moduli space.

  By Proposition \ref{ccgraphqtree} and 
  Proposition \ref{infinitediam}, the image under $\Upsilon_0$
  of the line $\gamma$ is an unparameterized quasi-geodesic in
  ${\cal P\cal C}(S)$ of infinite diameter. As $\phi$ acts on this
  quasi-geodesic as a nontrivial translation,
  this quasi-geodesic is a quasi-axis for $\phi$ acting on
  ${\cal P\cal C}(S)$ and hence
  $\phi$ acts as a hyperbolic isometric on ${\cal P\cal C}(S)$.

  Vice versa, if the axis of $\phi$ is not contained in the
  principal stratum, then the support of the horizontal and
  vertical measured
  geodesic laminations defined by this axis is not complete. By
 Proposition \ref{gromovboundary}, this implies that
  the image of the axis under the map $\Upsilon_0$ has finite
  diameter in ${\cal P\cal C}(S)$. The corollary follows.
\end{proof}

\begin{remark}\label{intermediate}
  The construction of the principal curve graph, which can be thought of as an
  electrification of the usual curve graph, can be used to find a system
  of intermediate graphs which are obtained from the curve graph by adding edges and
  are obtained from the principal curve graph by deleting some edges.
  An example is the graph whose vertices are simple closed curves and where
  two such vertices $c,d$ are connected by an edge if there is at least one
  component of $S\setminus \{c,d\}$ which either is not simply connected or which
  is a polygon with at least 10 sides or if there are at least two components
  which are different from quadrangles or six-gons. It can be shown as in 
  Proposition \ref{ccgraphqtree} that this graph ${\cal E}$ is hyperbolic, and geodesics
  in ${\cal C\cal G}(S)$ map by the vertex inclusion 
  to uniform unparameterized quasi-geodesics in
  ${\cal E}$. It can also fairly easily be seen that the graph ${\cal E}$ is not a quasi-tree
  but admits a quasi-isometric embedding into a product of quasi-trees. 
  As applications of this construction are not immediate, we postpone this discussion to 
  forthcoming work. 
  \end{remark}

\section{Random walks}\label{random}

We use Proposition \ref{gromovboundary} and Corollary \ref{hyperbolic}
to show Theorem \ref{poisson} from the introduction.
For its formulation, call a subgroup $\Gamma$ of 
${\cal M\cal C\cal G}$ \emph{non-elementary} if 
it contains at least two independent pseudo-Anosov elements. 

The \emph{entropy} of a probability measure $\mu$ on
the group ${\cal M\cal C\cal G}$ is defined as 
\[H(\mu)=-\sum_{g\in {\cal M\cal C\cal G}} \mu(g)\log \mu(g).\]
  The measure $\mu$ is said
  to have \emph{finite logarithmic moment} for the action of
  ${\cal M\cal C\cal G}$ on the curve graph
  ${\cal C\cal G}(S)$ if 
  \[\sum_{g\in {\cal M\cal C\cal G}}\mu(g)
   (\max\{ 0,d_{{\cal C\cal G}(S)}(c,gc)\})<\infty.\]

The following is a more precise version of Theorem \ref{poisson}. 

\begin{theo}\label{thm:random}
  Let $\Omega$ be a random walk on ${\cal M\cal C\cal G}$ generated
  by a probability measure $\mu$ on ${\cal M\cal C\cal G}$ 
 with the following properties.
 \begin{enumerate}
 \item The support of $\mu$ generates a non-elementary semi-subgroup 
   $H$ of ${\cal M\cal C\cal G}$ 
   which contains at least one
pseudo-Anosov element whose attracting fixed point is a minimal complete
geodesic lamination. 
\item $\mu$ has finite entropy and finite logarithmic moment
for the action of ${\cal M\cal C\cal G}$ on the curve graph.
\end{enumerate}
  Then the Poisson boundary of the walk
  can be realized as a ${\cal M\cal C\cal G}$-invariant measure class
  on the space $\partial {\cal P\cal C}(S)$ of
  minimal complete geodesic laminations.
\end{theo}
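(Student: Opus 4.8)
The plan is to deduce Theorem \ref{thm:random} from the general theory of random walks on hyperbolic spaces, specialized to the action of $\mathcal{MCG}$ on the principal curve graph $\mathcal{PC}(S)$. By Theorem \ref{hyp} (that is, the combination of Proposition \ref{ccgraphqtree}, Proposition \ref{infinitediam}, Proposition \ref{gromovboundary}, and Proposition \ref{quasitree}), $\mathcal{PC}(S)$ is a Gromov hyperbolic, in fact quasi-tree, space of infinite diameter on which $\mathcal{MCG}$ acts by simplicial isometries, and its Gromov boundary $\partial\mathcal{PC}(S)$ is the space of minimal complete geodesic laminations with the Hausdorff topology. The first step is to check the hypotheses of the Maher--Tiozzo theorem \cite{MT18}: namely that the subsemigroup $H$ generated by the support of $\mu$ acts on $\mathcal{PC}(S)$ in a \emph{non-elementary} way, i.e.\ it contains two independent loxodromic isometries with distinct fixed point pairs in $\partial\mathcal{PC}(S)$. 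Here I would use Corollary \ref{hyperbolic}: the assumed pseudo-Anosov element $\phi\in H$ whose attracting fixed point is a minimal complete geodesic lamination has its Teichm\"uller axis in the principal stratum, hence acts as a hyperbolic (loxodromic) isometry on $\mathcal{PC}(S)$. Non-elementarity of $H$ as a subsemigroup of $\mathcal{MCG}$ (two independent pseudo-Anosovs) together with a standard ping-pong/conjugation argument inside $H$ produces a second loxodromic element of $\mathcal{PC}(S)$ independent from $\phi$: conjugating $\phi$ by a generic element of $H$ that does not fix the endpoints of $\phi$ yields another element in the principal stratum with disjoint attracting/repelling laminations.

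The second step is to invoke the convergence and Poisson boundary results of Maher--Tiozzo. Since $\mathcal{PC}(S)$ is separable Gromov hyperbolic, $\mu$ has finite logarithmic moment for the action on $\mathcal{C\mathcal G}(S)$ and hence — because the inclusion $\mathcal{C\mathcal G}(S)\to\mathcal{PC}(S)$ is $1$-Lipschitz (or $2$-Lipschitz in the two exceptional cases), so $d_{\mathcal{PC}(S)}(c,gc)\le 2\,d_{\mathcal{C\mathcal G}(S)}(c,gc)$ — also finite logarithmic moment for the action on $\mathcal{PC}(S)$, and the walk is non-elementary, the theorem of \cite{MT18} gives: almost every sample path $(w_n)$ converges to a point $\mathrm{bnd}(w)\in\partial\mathcal{PC}(S)$, the resulting hitting (harmonic) measure $\nu$ on $\partial\mathcal{PC}(S)$ is the unique $\mu$-stationary measure, it is non-atomic, and — this is the key output — when additionally $H(\mu)<\infty$, the measure space $(\partial\mathcal{PC}(S),\nu)$ is the Poisson boundary of $(\mathcal{MCG},\mu)$. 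This is precisely the ``finite entropy + finite logarithmic moment for a non-elementary action on a hyperbolic space implies the boundary is the Poisson boundary'' criterion; I would cite it in the form stated in \cite{MT18} (building on Kaimanovich's strip/ray criterion), noting that $\mathcal{PC}(S)$ being a quasi-tree makes the geometric input especially transparent.

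The third step is cosmetic but necessary: translating the measure-theoretic identification of the Poisson boundary with $(\partial\mathcal{PC}(S),\nu)$ into the statement that the Poisson boundary is realized as a $\mathcal{MCG}$-invariant measure class on the space of minimal complete geodesic laminations equipped with the Hausdorff topology. This is immediate from Proposition \ref{gromovboundary}, which identifies $\partial\mathcal{PC}(S)$ with exactly that space as a topological $\mathcal{MCG}$-space; the measure class of $\nu$ is $\mathcal{MCG}$-invariant because $\nu$ is $\mu$-stationary and the walk is non-elementary, so the quasi-invariance follows from the standard fact that distinct stationary measures would have to be mutually singular. Finally, ``complete geodesic lamination'' is by definition maximal (and here minimal), so the conclusion matches the phrasing of Theorem \ref{poisson}.

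The main obstacle I anticipate is the verification that the \emph{semigroup} $H$ (not group) acts non-elementarily on $\mathcal{PC}(S)$, and more precisely that one can locate a pair of independent loxodromics \emph{inside $H$} rather than inside the group it generates — Maher--Tiozzo's framework does accommodate semigroup-generated walks, but one must be careful that the attracting lamination of the given pseudo-Anosov lies in the principal stratum (so that it is a genuine boundary point of $\mathcal{PC}(S)$ by Corollary \ref{hyperbolic}, not a non-complete filling lamination on which $\phi$ would act parabolically or elliptically) and that a conjugate producing a second independent loxodromic can be taken with attracting point \emph{also} in the principal stratum; a conjugate of a principal-stratum pseudo-Anosov is again a principal-stratum pseudo-Anosov, so this reduces to finding $h\in H$ moving the fixed-point pair of $\phi$, which genericity of the support (non-elementarity of $H$ in $\mathcal{MCG}$) supplies. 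A secondary, more routine point is confirming that $\mathcal{PC}(S)$, built from uncountably many vertices, is separable in the sense required by \cite{MT18} — it has countably many vertices (isotopy classes of simple closed curves), so this is fine, but it is worth stating explicitly.
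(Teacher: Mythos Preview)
Your overall strategy and your first step (non-elementarity on $\mathcal{PC}(S)$ via Corollary~\ref{hyperbolic} applied to $\phi$ and a conjugate) match the paper's proof. The gap is in your second step. The criterion you quote, ``finite entropy $+$ finite logarithmic moment for a non-elementary action on a hyperbolic space implies the Gromov boundary is the Poisson boundary,'' is not what \cite{MT18} actually proves: their Theorem~1.5 requires in addition that the action be \emph{acylindrical}. You apply it to $\mathcal{PC}(S)$, but acylindricity of the $\mathcal{MCG}$-action on $\mathcal{PC}(S)$ is nowhere established in the paper and is far from obvious---by Corollary~\ref{hyperbolic}, every pseudo-Anosov element whose axis lies outside the principal stratum acts with bounded orbits on $\mathcal{PC}(S)$, so there are plenty of infinite-order elements with uniformly bounded displacement. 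The quasi-tree property does not help here, and Kaimanovich's ray/strip criterion needs exactly the kind of properness that acylindricity supplies.

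The paper sidesteps this by a two-boundary argument. It applies Theorem~1 of \cite{MT18} (convergence and uniqueness of the stationary measure, which needs only non-elementarity) to both $\mathcal{PC}(S)$ and $\mathcal{CG}(S)$, obtaining the unique $\mu$-stationary hitting measure on each Gromov boundary. Since $\partial\mathcal{PC}(S)\subset\partial\mathcal{CG}(S)$ by Proposition~\ref{gromovboundary}, the stationary measure on $\partial\mathcal{PC}(S)$ is also a stationary measure on $\partial\mathcal{CG}(S)$, and uniqueness forces the two to coincide. Only then does the paper invoke Theorem~1.5 of \cite{MT18}---for the action on $\mathcal{CG}(S)$, which \emph{is} acylindrical by \cite{Bw08}---to identify $(\partial\mathcal{CG}(S),\nu)$ with the Poisson boundary. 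Since $\nu$ is in fact supported on the subset $\partial\mathcal{PC}(S)$, the conclusion follows. Your direct route would go through only if you could independently verify acylindricity (or a suitable WPD condition) for the action on $\mathcal{PC}(S)$; absent that, the detour through $\mathcal{CG}(S)$ is the missing idea.
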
  
\begin{proof} Since by assumption and Corollary \ref{hyperbolic}, 
  the semigroup $H$ generated by
  the support of $\mu$ is non-elementary and contains
  at least one pseudo-Anosov element $\phi$ which acts as a hyperbolic
  isometry on ${\cal P\cal C}(S)$, the semigroup $H$ is non-elementary
  as a group of isometries acting on ${\cal P\cal C}(S)$ (apply
  Proposition \ref{gromovboundary} and Corollary \ref{hyperbolic}
  to conjugates of $\phi$).

   Theorem 1 of \cite{MT18} now shows that for any vertex
  $x\in {\cal P\cal C}(S)$, almost every sample path
  $(\omega_nx)\subset {\cal P\cal C}(S)$ converges to a point
  $\omega_+\in \partial {\cal P\cal C}(S)$. The resulting hitting
  measure $\nu$ is non-atomic, and it is the unique $\mu$-stationary
  measure on $\partial{\cal P\cal C}(S)$. 

  The same construction also applies for the action of the random
  walk on ${\cal C\cal G}(S)$. As
  $\partial {\cal P\cal C}(S)\subset \partial {\cal C\cal G}(S)$
  by Proposition \ref{gromovboundary},
  the $\mu$-stationary measure $\nu$ on $\partial {\cal P\cal C}(S)$ also
  can be viewed as a $\mu$-stationary measure on
  $\partial {\cal C\cal G}(S)$. By uniqueness, $\nu$ equals the
  hitting measure of the random walk on
  ${\cal C\cal G}(S)$.

  Now the action of ${\cal M\cal C\cal G}$ on ${\cal C\cal G}(S)$ is
  acylindrical \cite{Bw08} and therefore by Theorem 1.5 of \cite{MT18},
  the Poisson boundary of $({\cal M\cal C\cal G},\mu)$ equals
  the Gromov boundary $\partial {\cal C\cal G}(S)$ 
  of ${\cal C\cal G}(S)$,
  equipped with the hitting measure $\nu$. But this hitting measure is
  just  the unique stationary
  measure for the action of ${\cal M\cal C\cal G}$ on the
  boundary of the principal curve graph, which completes the proof of the
  theorem.
\end{proof}

\bigskip

\noindent
MATHEMATISCHES INSTITUT DER UNIVERSIT\"AT BONN\\
ENDENICHER ALLEE 60, D-53115 BONN, GERMANY


\begin{thebibliography}{CEG87}

























\bibitem[Bw08]{Bw08} B.~Bowditch, {\em Tight
    geodesics in the curve complex}, Invent. Math. 171 (2008),
  281--300.



































\bibitem[GM18]{GM18} V. Gadre, J. Maher,
  {\em The stratum of random mapping classes},
  Erg. Th. \& Dyn. Sys. 38 (2018), 2666--2682.



\bibitem[H06]{H06} U.~Hamenst\"adt, {\em Train tracks
and the Gromov boundary of the complex of curves}, 
in ``Spaces of Kleinian groups'' (Y.~Minsky, M.~Sakuma,
C.~Series, eds.), London Math. Soc. Lec. Notes 329 (2006),
187--207.

\bibitem[H09]{H09} U.~Hamenst\"adt, {\em Geometry of the
mapping class groups I: Boundary amenability}, 
Invent. Math. 175 (2009), 545--609.













\bibitem[H24]{H24} U.~Hamenst\"adt, {\em 
Periodic orbits in the thin part of strata}, J. Reine Angew. Math. 809 (2024), 41--89.





\bibitem[KM96]{KM96} V. Kaimanovich, H. Masur, {\em The Poisson boundary
    of the mapping class group}, Invent. Math. 125 (1996),  221--264.


\bibitem[KR14]{KR14} I.~Kapovich, K.~Rafi, 
{\em On hyperbolicity of free splitting and free factor complexes},
Groups, Geom. Dyn. 8 (2014), 391--414.




\bibitem[K99]{K99} E.~Klarreich, {\em The boundary at infinity
    of the curve complex and the relative Teichm\"uller space},
  unpublished manuscript. Ann Arbor (1999).

  







\bibitem[MT18]{MT18} 
J. Maher, G. Tiozzo, {\em Random walks on weakly hyperbolic
groups}, J. Reine Angew. Math. 742 (2018), 187--239.

\bibitem[Man05]{Man05} J.~Manning, {\em Geometry of pseudocharacters}, 
Geom. Topol. 9 (2005), 1147--1185. 



\bibitem[MM99]{MM99} H.~Masur, Y.~Minsky, {\em Geometry of the
complex of curves I: Hyperbolicity}, Invent. Math. 138 (1999),
103-149.


\bibitem[MM04]{MM04} H. Masur, Y. Minsky,
{\em Quasi-convexity in the curve complex}, Cont. Math. 355, 
Amer. Math. Soc., Providence, (2004), 309--320. 












\bibitem[PH92]{PH92} R.~Penner with J.~Harer, {\sl Combinatorics
of train tracks}, Ann. Math. Studies 125, Princeton University
Press, Princeton 1992.












\end{thebibliography}
\end{document}